\newtheorem{theorem}{Theorem}
\newtheorem{corollary}[theorem]{Corollary}
\newtheorem{definition}[theorem]{Definition}
\newtheorem{lemma}[theorem]{Lemma}
\newtheorem{remark}[theorem]{Remark}
\newenvironment{proof}[1][Proof]{\noindent\textbf{#1.} }{\ \rule{0.5em}{0.5em}}
\def\bbb#1{\hbox {{\gordas #1}}}
\font\gordas = msbm10 at 12pt
\def\bbb#1{\hbox {{\gordas #1}}}
\def\zet{{\bbb Z}}
\def\zet{{\bbb Z}}
\def\cal{\mathcal}
\begin{document}

\title{  Transience and recurrence   of random walks \\
on percolation clusters in an ultrametric space}

\author{\bf D. A. Dawson $^{1}$ L.G. Gorostiza $^{2}$}

\date{\small  \today}

\maketitle

\begin{abstract}
We study transience and recurrence of simple random walks on percolation clusters in the
hierarchical group of order $N$, which is an ultrametric space. The connection probability on the
hierarchical group for two points separated by distance $k$ is of the form $c_k/N^{k(1+\delta)},
\delta>0$, with $c_k=C_0+C_1\log k+C_2k^\alpha$, non-negative constants $C_0, C_1, C_2$, and
$\alpha>0$. Percolation occurs for $\delta<1$, and for the critical case, $\delta=1$ $\alpha>0$ and
sufficiently large $C_2$. We show that in the case $\delta<1$ the walk is transient, and in the
case $\delta=1,C_2>0,\alpha
>0$  there exists a critical $\alpha_c\in(0,\infty)$ such that the walk is recurrent for
$\alpha<\alpha_c$ and transient for $\alpha>\alpha_c$. The proofs involve ultrametric random
graphs,  graph diameters, path lengths, and electric circuit theory. Some comparisons are made with
behaviours of simple random walks on long-range percolation clusters in the one-dimensional
Euclidean lattice.
\end{abstract}

\noindent  {\bf Keywords:} Percolation, hierarchical group, ultrametric space,  random graph,
renormalization, random walk, transience, recurrence.\vspace*{0.5cm}

\noindent {\bf AMS Subject Classification: Primary  05C80; 05C81; 60K35, Secondary 60C05}
\medskip

  \vspace*{0.5cm}

\noindent {\footnotesize $^{1)}$ Carleton University, Ottawa, Canada\\
e-mail: ddawson@math.carleton.ca, supported by an NSERC Discovery grant.}

\smallskip

\noindent {\footnotesize $^{2)}$ {CINVESTAV, Mexico City, Mexico\\
e-mail: lgorosti@math.cinvestav.mx}, supported by CONACyT grant 98998.}

\newpage
\tableofcontents
\newpage

\section{Introduction}
\setcounter{secnumdepth}{4} \setcounter{equation}{0}
\numberwithin{equation}{section} \numberwithin{theorem}{section}

 Network science is an active field of
research due to its many areas of application (statistical physics, biology, computer science,
communications, economics, social sciences, etc.), and to the interesting mathematical problems
that it gives rise to, many of which remain open. Percolation plays an important role, for example
in the study of robustness of networks. Hierarchical networks occur in models where there is a
multiscale organization with an ultrametric structure, e.g., in statistical physics (in particular
disordered spin systems), protein dynamics,  population genetics and computer science. Several
areas of physics where ultrametric structures are present were overviewed in \cite{RTV}. An
ultrametric model in population genetics was introduced in \cite{SF}. Hierarchical organizations in
complex networks were discussed in \cite{BA}.
 A classical ultrametric space is the set of $p$-adic numbers.
A review of many areas where $p$-adic analysis is used, specially in physics including quantum
physics, appeared in \cite{DKKV}.  The ultrametric space we deal with in this paper is $\Omega_N$,
the {\it hierarchical group of order } $N$, described at the end of the Introduction. Background on
ultrametric spaces can be found e.g. in \cite{Sc}.

Stochastic models on hierarchical groups have played a fundamental role in mathematical physics and
population biology. Dyson \cite{D} introduced such a structure in order to gain insight on the
study of ferromagnetic models on the Euclidean lattice of dimension 4, as it provides a
``caricature'' of the Euclidean lattice in dimensions ``infinitesimally close'' to 4. A reason for
this approach is that it is possible to carry out a renormalization group analysis in a rigorous
way in hierarchical groups \cite{BS,CE}. Hierarchical groups have also been used in the study of
self-avoiding random walks in four dimensions \cite{BEI}, Anderson localization in disordered media
\cite{BHS, KM}, mutually catalytic branching in population models \cite{CDG,DGZ}, interacting
diffusions \cite{DG1, DG2}, occupation times of branching systems \cite{DGW1, DGW2}, search
algorithms \cite{K1,K2}. Thus, stochastic models, in particular random walks, on ultrametric spaces
are a natural field of study. A class of random walks on hierarchical groups, called {\it
hierarchical random walks}, and their degrees of transience and recurrence  were studied in
\cite{DGW3,DGW4} (and references therein). A related model in the context of spin-glass was treated
in \cite{O}. Other properties of systems of hierarchical random walks appeared in \cite{BGT1,
BGT2}. An analogous class of hierarchical random walks on the $p$-adic numbers was studied in
\cite{AKZ,AZ}. L\'evy processes on totally disconnected groups (including the $p$-adic integers)
were discussed in \cite{E}, pseudodifferential equations and Markov processes over $p$-adics were
treated in \cite{CZ}. A random walk model for the dynamics of proteins was discussed in \cite{ABZ}.
In this case the states of the walk are related to the local minima of the potential energy of a
protein molecule. These are a few representative references on stochastic models on ultrametric
spaces.

With these precedents and previous work on percolation in hierarchical groups \cite{DGo1,DGo2,KMT},
we were motivated  to investigate the behaviour of random walks on percolation clusters in those
groups, and to compare results with similar ones for random walks on long-range percolation
clusters in  Euclidean lattices (referred to below).

The renormalization method for the study of percolation in hierarchical networks involves
ultrametric random graphs. An {\it ultrametric random graph} $U\!RG(M,d)$ is a  graph on a finite
set of  $M$ elements with an ultrametric $d$ and connection probabilities $p_{x,y}$ that are random
and depend on the distance $d(x,y)$ (see \cite{DGo2}, Section 3.4). A more detailed description
related to the model is given in Section 2.

In \cite{DGo1} we studied asymptotic percolation in $\Omega_N$ in the limit $N\to\infty$ (mean
field percolation) with a certain class of connection probabilities depending on the   distance
between points. In this case it was possible to obtain a necessary and sufficient condition for
percolation. The Erd\H os-R\'enyi theory of giant components of random graphs was a useful tool,
although there are significant differences between classical random graphs and ultrametric ones. Percolation in $\Omega_N$  with fixed $N$ is technically more
involved, and so far only sufficient conditions for percolation or for its absence are known. This
was studied in \cite{DGo2}, where connectivity results of Erd\H os-R\'enyi graphs played a basic
role. At the same time an analogous model was studied in \cite{KMT} using different methods. In
\cite{DGo2} the ``critical case'' was analyzed in more depth.
 A relationship between the results of \cite{DGo2} and \cite{KMT} was given in \cite{DGo2} (Remark 3.2).  The relevance of percolation in hierarchical groups has been noted  for contact processes \cite{AS} and
 epidemiology \cite{G}.

In \cite{DGo2} we studied percolation in the hierarchical group   $(\Omega_N,d)$, integer $N\geq
2$, ultrametric $d$, with
 probability of connection between two points ${\bf x}$ and
${\bf y}$ such that $d({\bf x},{\bf y})=k\geq 1$  of the form $p_{{\bf x},{\bf y}}=
c_k/N^{(1+\delta)k}$, where $\delta>-1$ and the $c_k$ are positive constants, all connections being
independent. Here we restrict to $\delta >0$. The results  refer to existence of {\it percolation
clusters} (infinite connected sets) of positive density. {\it Percolation} is said to occur if a
given point of $\Omega_N$ belongs to a percolation cluster with positive probability. The specific
point does not matter because the model is translation-invariant. By ultrametricity, percolation is
possible only if there exists arbitrarily large $k$ such that $c_k>0$ (otherwise all connected
components are finite). Thus, percolation in $\Omega_N$ can be regarded as long-range percolation.
Briefly, in \cite{DGo2} the results are: if $\delta<1$ and $c=\inf_kc_k$ is large enough, then
percolation occurs, if $\delta>1$ and $\sup_kc_k<\infty$, then percolation does not occur, and for
the critical case, $\delta=1$, which is the most delicate, percolation may or may not occur
according to some special forms of $c_k$ such that $c_k\to\infty$ as $k\to\infty$. When percolation
occurs the infinite cluster is unique.

In the critical case $c_k$ was taken of the form
\begin{equation}
\label{eq:1.1} c_k=C_0+C_1\log k+C_2k^\alpha
\end{equation}
with non-negative constants $C_0,C_1,C_2$, and $\alpha>0$, and for the case $C_2>0$ percolation was
established for $\alpha>2$ and any $C_1$ if $C_0$ and $C_2$ are large enough \cite{DGo2} (Theorem
3.3(a)). The proof was based on a renormalization argument of the type used in statistical physics.
Results for the  case $C_2=0$ were also obtained, in particular if $C_1<N$, then percolation does
not occur for any $C_0$. Here we show that percolation occurs for any $\alpha>0$ if $C_2$ is large
enough (Theorem \ref{T.1}), which  was an open problem in \cite{DGo2} (section (3.4)). Here we need
$\alpha >0$ for the results on random walks. The proof uses the renormalization ideas introduced in
\cite{DGo2}, but in a different way which is more intrinsic to the model.

The renormalization approach in \cite{DGo2} was applied for a  preliminary percolation result
replacing $c_k$ with $c_k^\prime$ of the form
\begin{equation}
\label{eq:1.2} c^\prime_{k_n}=C+a\log n\cdot n^{b\log N},
\end{equation}
constants $C\geq 0, a>0, b>0$, where
\begin{equation}
\label{eq:1.3} k_n=\lfloor Kn\log n\rfloor, n=1,2,\ldots,
\end{equation}
 constant
$K>0$, and $c^\prime_{k_n}\leq c_k^\prime\leq c^\prime_{k_{n+1}}$ for $k_n<k<k_{n+1}$, with some
technical conditions on $K$ and $b$ \cite{DGo2} (Theorem 3.5(b)), which was used  to prove
percolation for $\alpha>2$. The conditions were $2/\log N<K<b$ (with a minor modification it is
possible to have also $K=b$).
 The proof of percolation for $\alpha>2$ in \cite{DGo2} is based on the relationship between (1.1)
  and (1.2) with  $\alpha >b\log N$ (proof of Theorem 3.3 in \cite{DGo2}).
  Note that $c_k^\prime\leq c_k$.  We will write $c_k$ for $c_k^\prime$ in (\ref{eq:1.2}) for
  simplicity of notation, and no confusion should arise. All one needs to remember regarding (\ref{eq:1.1}) and (\ref{eq:1.2}) is
  $\alpha
 >b\log N$. Percolation with $c_k^\prime$ imples percolation with $c_k$ as in the proof of Theorem
  3.3 in \cite{DGo2}.
The scheme with  (1.2), (1.3) is not  used for the proof of percolation here, but it constitutes a
technical tool for the study of behaviour of random walks on percolation clusters regarding some
properties of the  clusters, hence we will need to refer to some techniques in \cite{DGo2}.

The  main results in the paper refer to transience and recurrence of simple (nearest neighbour)
random walks on the percolation clusters in $\Omega_N$. We show that the random walk is transient
for $\delta<1$ (Theorem \ref{T.trans1}), and in the critical case, $\delta=1$, $C_2>0$, there
exists a critical  $\alpha_c\in(0,\infty)$ such that the random walk is recurrent for $\alpha
<\alpha_c$
 and transient for $\alpha >\alpha_c$ (Theorem \ref{T.main2}).

These
 results are comparable in part with those on long-range percolation in the one-dimensional Euclidean lattice $\zet$ with connection probabilities of the
form $\beta|x-y|^{-s}$ as $|x-y|\to \infty$, although the Euclidean and the ultrametric structures
are quite different. Long-range percolation in $\zet$ with those connection probabilities was
introduced by Schulman \cite{S}, and studied further for $\zet^d$ by Newman and Schulman \cite{NS},
and Aizenman and Newman \cite{AN}. Berger \cite{B} studied transience and recurrence of random
walks on the percolation clusters in $\zet^d$ for $d=1,2$. The results for $d=1$ are, roughly, that
percolation can occur if $1<s\leq 2$, and does not occur if $s>2$, and if $1<s<2$, then the  walk
is transient, and if $s=2$, then the walk is recurrent. Hence the results agree for $\Omega_N,
0<\delta<1$, and  $\zet, 1<s<2$,  by using the ultrametric $\rho({\mathbf x}, {\mathbf
y})=N^{d({\mathbf x}, {\mathbf y})}$ (``Euclidean radial distance'')
 on $\Omega_N$, and
$s=\delta+1$. But there is a significant difference. Percolation in $\zet$ can be obtained  by
increasing the probability of connection between nearest neighbors ( separated by distance $1$)
\cite{B} (Theorem 1.2), whereas for $\Omega_N$
 short-range connections  play no role due to ultrametricity.
Our results for $\delta=1$ with $c_k$ given by (\ref{eq:1.1}) would correspond to the case on
$\mathbb{Z}$  with  $s=2$ taking $\beta$ to be  a function of distance. Heat kernel bounds and
scaling limits for the walks on the long-range percolation clusters of $\mathbb{Z}^d$, $d\geq 1$,
were obtained in \cite{CS1,CS2}. It would be interesting to find similar results for the
hierarchical group.

Grimmett et al \cite{GKZ} studied the behaviour of random walks on (bond) percolation clusters in
the Euclidean lattice $\zet^d$ using electric circuit theory \cite{DS} (see also  \cite{BPP,P}).
Recurrence of the walk for $d\leq 2$ follows  directly from recurrence on the whole space $\zet^d$,
whereas transience for $d\geq 3$ was difficult to prove. For recurrence in long-range percolation
the situation is  different because the walk is only defined on the percolation cluster (both in
$\zet^d$ and in $\Omega_N$). Although the models on $\zet^d$ and $\Omega_N$ are quite different, we
are able to use some of the basic ideas on the relationship between reversible Markov chains and
electric circuits (see e.g. \cite{DS,LP,KL}) that have been used for $\zet^d$, but in the case of
$\Omega_N$ the ultrametric structure plays a fundamental role.

The transience and recurrence behaviours of  walks on the percolation clusters are determined
basically by the ultrametric geometry of $(\Omega_N,d)$ and the form of the connection
probabilities, rather than by detailed properties of the structures of the percolation clusters.
The proofs involve some properties of the  clusters, in particular cutsets, graph diameters and
lengths of paths.

We end the Introduction by recalling  ($\Omega_N,d$) and some things about it. For an integer
$N\geq 2$, the {\it hierarchical group ({\rm also called} hierarchical lattice) of order} $N$ is
defined as \vglue.3cm \noindent \centerline{ $\Omega_N=\{{\bf x}=(x_1,x_2,\ldots): x_i \in\zet_N,
x_i=0\quad{\it a.a.i}\}.$ } \vglue.3cm \noindent with addition componentwise mod $N$, where
$\zet_N$ is the cyclic group of order $N$.  The {\it hierarchical distance} on $\Omega_N$, defined
as
$$
d({\bf x},{\bf y})= \left\{
\begin{array}{lll}
0&{\rm if}&{\bf x}={\bf y}, \\
\max \{i:x_i\neq y_i\}&{\rm if}&{\bf x}\neq{\bf y},
\end{array}\right.
$$
satisfies the strong (non-Archimedean) triangle inequality,
$$d({\bf  x},{\bf y})\leq \max \{d({\bf x},{\bf z}),
d({\bf z},{\bf y})\}\quad\hbox{\rm for any}\quad {\bf x},{\bf y},{\bf z}.$$ Hence $(\Omega_N,d)$ is
an ultrametric space, and it can be represented as the top of an infinite regular $N$-ary tree
where the distance between two points is the number of levels from the top to their closest common
node. The point $(0,0,\ldots)\in\Omega_N$ is taken as origin and denoted by ${\bf 0}$. The
probability of an edge connecting two points ${\bf x}$ and ${\bf y}$ is given by
\begin{equation}
\label{eq:1.4} p_{{\bf x},{\bf y}}=\min \left(\frac{c_k}{N^{(1+\delta)k}},1\right)\,\, {\rm if}\;\;
d({\bf x},{\bf y})=k,
\end{equation}
where $\delta>0$ and $c_k>0$ for every $k$, all edges being independent.
 Note that any point in a percolation cluster has a finite (random) number of
neighbours since the number has finite expectation, hence the simple (nearest neighbour) random
walk on a percolation cluster is well defined.

An essential property of ultrametric spaces that differentiates them from Euclidean spaces is that
two balls are either disjoint or one is contained in the other. The following definitions and
properties are used throughout. The ball of diameter $k\geq0$ containing ${\bf x}\in\Omega_N$ is
defined as $B_k({\bf x})=\{{\bf y}:d({\bf x},{\bf y})\leq k\}$. Those balls are generally referred
to as $k$-{\it balls}. They contain $N^k$ points. For $k>0$, a $k$-ball is the union of $N$
disjoint $(k-1)$-balls that are at distance $k$ from each other. For $j>k>0$, we call $B_j({\bf
0})\backslash B_k({\bf 0})$ the {\it annulus $(k,j]$, {\rm or} $(k,j]$-annulus}. It contains
$N^j(1-N^{k-j})$ points. A $j$-ball is the union of $N^{j-k}$ disjoint $k$-balls. The $k$-balls in
the $(k,j]$-annulus are at distance at least $k+1$ and at most $j$ from each other. This and (1.4)
allow to obtain upper and lower bounds for the probability that subsets of two $k$-balls in the
$(k,j]$-annulus are connected by at least one edge. Such bounds are used in the proofs.

In Section 2 we prove percolation for $\delta =1,\;\alpha >0$. In Section 3 we prepare the tools
for the proofs of transience and recurrence on the random walks based on the properties of the
clusters. In Section 4  we give the results and proofs of transience and recurrence of the walks
using electric circuit theory.

\section{Percolation in $\Omega_N$ for $\delta=1$}\label{S.perc}

The results for $\delta<1$ and $\delta >1$ have been mentioned in the Introduction. For $\delta=1$
we regard the model as
 the  infinite random graph
\begin{equation*}\mathcal{G}^\infty_N=\mathcal{G}^\infty_N(C_0,C_1,C_2,\alpha):=\mathcal{G}(V_\infty,\mathcal{E}_\infty)\end{equation*}
with vertices $V_\infty=\Omega_N$  and edges $\mathcal{E}_\infty$, and with the probability of
connection by an edge $({\bf x},{\bf y})$ \begin{equation}\label{E.2.1} P(({\bf x},{\bf
y})\in\mathcal{E}_\infty)=p_{{\bf x},{\bf y}}=\min \left(\frac{c_k}{N^{2k}},1\right)\,\, {\rm
if}\;\; d({\bf x},{\bf y})=k,\end{equation} all connections being independent, and the $ c_k$ are
of the form \begin{equation}\label{E.2.2} c_k=C_0+C_1\log k+ C_2k^\alpha,\end{equation} with
constants $C_2
> 0$ and $\alpha > 0$.  For simplicity of notation we set, without loss of generality, $C_0=C_1=0$
in (\ref{E.2.2}).  The graph $\mathcal{G}^\infty_N$ is a limit of finite graphs of diameter $k$
which we will study as $k\to\infty$.

\begin{theorem}\label{T.1} For sufficiently large $C_2$, there exists a unique percolation cluster  of positive density at least
 $\varepsilon=\varepsilon(N,C_2,\alpha)$  in
$\Omega_N$ to which $\mathbf{0}$ belongs with positive probability.

\end{theorem}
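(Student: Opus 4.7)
The plan is to establish percolation by a direct scale-by-scale renormalization along the natural hierarchy of balls $B_k(\mathbf{0})$, $k=1,2,\ldots$. The objective is to show that, with a probability bounded away from $0$ uniformly in $k$, the connected component of $\mathbf{0}$ inside $B_k(\mathbf{0})$ contains at least $\theta N^k$ vertices for some fixed density $\theta=\theta(N,C_2,\alpha)>0$. Since the component sizes of $\mathbf{0}$ restricted to $B_k(\mathbf{0})$ are monotone increasing in $k$, a uniform lower bound of the form $\Pr(X_k\geq \theta N^k)\geq \eta$ forces the component of $\mathbf{0}$ in $\Omega_N$ to be infinite with probability $\geq \eta$; translation invariance under the group operation of $\Omega_N$ then yields positive density $\varepsilon$, and uniqueness follows along the lines of \cite{DGo2}.

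The core ingredient is an inductive merging estimate at scale $k$. A $k$-ball decomposes as the disjoint union of $N$ $(k-1)$-sub-balls $B^{(1)},\ldots,B^{(N)}$ (with $B^{(1)}=B_{k-1}(\mathbf{0})$), and the edges internal to the $B^{(j)}$ together with the edges at distance exactly $k$ between them are mutually independent. Suppose each $B^{(j)}$ contains an internal ``$(k-1)$-giant'' of size at least $\theta N^{k-1}$. The number of potential distance-$k$ edges between any two such giants is at least $(\theta N^{k-1})^2$, each present with probability $C_2 k^\alpha / N^{2k}$, so
\begin{equation*}
\Pr\bigl(\text{two specified giants are not joined}\bigr)\;\leq\;\Bigl(1-\frac{C_2 k^\alpha}{N^{2k}}\Bigr)^{\theta^2 N^{2(k-1)}}\;\leq\;\exp\Bigl(-\frac{\theta^2 C_2 k^\alpha}{N^{2}}\Bigr).
\end{equation*}
A union bound over the $\binom{N}{2}$ pairs shows that all $N$ giants merge into a single component of size $\geq N\cdot\theta N^{k-1}=\theta N^k$ with probability $\geq 1-\delta_k$, where $\delta_k:=\binom{N}{2}\exp(-\theta^2 C_2 k^\alpha/N^{2})$. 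For $C_2$ large and any fixed $\alpha>0$ the series $\sum_{k\geq 1}\delta_k$ can be made arbitrarily small.

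The base case $k=1$ is $N$ vertices joined pairwise with probability $C_2/N^2$: taking $C_2$ much larger than $N^2\log N$, the ball $B_1(\mathbf{0})$ is the complete graph with probability $\geq 1-\delta_1$, supplying a $1$-giant of density $\theta=1$ that contains $\mathbf{0}$. Chaining the merging estimate with the base case, and using the translation invariance $\Pr(B^{(j)}\text{ has a giant})=\Pr(B_{k-1}(\mathbf{0})\text{ has a giant})$, the event $\mathcal{E}_k:=\{\mathbf{0}\text{ lies in a }k\text{-giant of }B_k(\mathbf{0})\}$ satisfies a recursion of the form $\Pr(\mathcal{E}_k)\geq \Pr(\mathcal{E}_{k-1})-(N-1)(1-b_{k-1})-\delta_k$, where $b_{k-1}$ is the translation-invariant probability that a generic $(k-1)$-ball contains any giant. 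A first-moment (density) count gives back $b_k\geq \Pr(\mathcal{E}_k)/\theta$, which closes the inductive loop and produces a uniform lower bound $\Pr(\mathcal{E}_k)\geq \eta>0$.

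The principal technical obstacle is to keep the density $\theta$ bounded away from $0$ throughout the induction: a naive iteration of $b_k\geq b_{k-1}^N(1-\delta_k)$ amplifies the complement $1-b_k$ by a factor of $N$ at each scale, so the decay of $\delta_k$ must absorb this amplification at every step. This is precisely where the super-polynomial decay $\delta_k\sim \exp(-cC_2 k^\alpha)$ (available for any $\alpha>0$ once $C_2$ is large) is used; it is what makes it possible to work ``intrinsically'' with the original sequence $c_k=C_2 k^\alpha$ of (\ref{E.2.2}) and to avoid the auxiliary renormalization scheme (\ref{eq:1.2})--(\ref{eq:1.3}) used in \cite{DGo2}.
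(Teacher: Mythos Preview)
Your overall architecture---track at each scale the probability that a $k$-ball carries a ``giant'' of density at least $\theta$, and show these giants merge with failure probability $\delta_k=\binom{N}{2}\exp(-\theta^2C_2k^\alpha/N^2)$---matches the paper's renormalization. But the closing step is where the argument breaks.

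You correctly identify that the naive recursion $b_k\ge b_{k-1}^N(1-\delta_k)$ amplifies the defect $1-b_k$ by a factor $N$ at each scale, giving
\[
1-b_k \;\le\; N(1-b_{k-1})+\delta_k \;\le\; \sum_{j=1}^{k} N^{\,k-j}\delta_j .
\]
You then assert that the super-polynomial decay $\delta_j\sim\exp(-cC_2 j^\alpha)$ ``absorbs this amplification at every step''. It does not. The term $j=1$ alone contributes $N^{k-1}\delta_1$, which diverges as $k\to\infty$ no matter how small $\delta_1$ is; more generally, for $0<\alpha<1$ one cannot have $\exp(-cC_2 j^\alpha)\le C'N^{-j}$ for all large $j$ and any fixed $C_2$. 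So the iteration you describe cannot keep $1-b_k$ bounded away from $1$, and the ``closing the loop'' via $b_k\ge \Pr(\mathcal{E}_k)/\theta$ (which in any case goes the wrong way: transitivity gives $\Pr(\mathcal{E}_k)\ge\theta\,b_k$, i.e.\ $b_k\le \Pr(\mathcal{E}_k)/\theta$) does not rescue it.

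The paper kills the factor $N$ by a two-threshold device rather than by the decay of $\delta_k$. It tracks $z_n(\varepsilon)=P(X_n<\varepsilon)$ together with $r_n(N\varepsilon)=P(X_n\ge N\varepsilon)$ and uses the key observation that if \emph{one} of the $N$ sub-balls has density $\ge N\varepsilon$, the parent already has density $\ge\varepsilon$. This yields
\[
z_{n+1}(\varepsilon)\;\le\; N\,z_n(\varepsilon)\,(1-r_n(N\varepsilon))^{\,N-1}\;+\;q_n^N(\varepsilon),
\]
and by taking $\varepsilon$ small and maintaining $E[X_n]\ge 2/3$ (via a bootstrap between the $z$-recursion and a product lower bound for $E[X_n\mathbf{1}_{X_n\ge\varepsilon}]$) one gets $N(1-r_n(N\varepsilon))^{N-1}<1$, turning the recursion into a genuine contraction. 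The summability of $q_n^N(\varepsilon)$ then finishes the job. Your plan needs this (or an equivalent) contraction mechanism; the decay of $\delta_k$ alone is not enough when $0<\alpha\le 1$.
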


\medskip

The proof of this result is given in the next subsection. We begin with the formulation for the
renormalization method.

\subsection{The hierarchy of random graphs}

A collection of vertices in a subset of $\Omega_N$ any two of which are linked by a path of edges
is called a {\em cluster} of the subset. We consider for each $k$-ball a maximal cluster with edges
only within the ball and not through paths going outside the ball (i.e., all edges of length $\leq
k$ not in the cluster are deleted). If there are more than one (maximal) cluster, then one of them
is chosen uniformly at random. In this way each $k$-ball has a unique attached cluster. The proof
will be based on the connections between the clusters in $k$-balls. When we refer to connections
between $k$-balls we mean direct edge connections (one or more) between their clusters. The {\em
density} of a $k$-ball is the size of its cluster normalized by the size of the ball $(N^k)$. Due
to our assumptions, the densities of different $k$-balls are i.i.d. An infinite connected subset of
$\Omega_N$ is called a {\em percolation cluster}.

 The main idea is to consider the distribution of the  clusters in the balls $B_k({\bf 0})$ of increasing $k$ by relating the random graphs in these balls to a
hierarchy of ultrametric random  graphs.

\subsubsection{Erd\H{o}s-R\'enyi graphs with random  weights, ultrametric random graphs}

In the classical  graph $\mathcal{G}(n,p)$ introduced by Gilbert \cite{Gil}  these graphs have a
set of $n$ vertices denoted by $V$ and there is an edge between each pair of vertices with
probability $p$ with these assigned independently for different pairs (see e.g.  \cite{Bo,JLR} for
background).  The behaviour of these graphs together with the random graphs $\mathcal{G}(n,m)$ in
the limit as $n\to\infty$ were studied by Erd\H{o}s and R\'enyi in a series of important papers. We
consider a modification of those graphs, namely, $\mathcal{G}(N,\{x_i\}_{i\in V})=
\mathcal{G}(N,\{p(x_i,x_j)\})$ in which the vertices have independent random weights $\{x_i\}_{i\in
V}$, and the probability that $i$ and $j$ are connected by an edge is a function $p(x_i,x_j)$, the
edges chosen independently conditioned on the weights. These are ultrametric random graphs as
stated in the Introduction.
\bigskip

 Given $\mathcal{G}^\infty_N$ as above we now introduce a sequence of related finite ultrametric random
 graphs\\
$\mathcal{G}_k(N,\{X_{k-1}(i)\}_{i\in V_k}),\,k\geq 1$, where $X_0(i)=1$ and for $k\geq 2$, the
$\{X_{k-1}(i)\}_{i\in V_k}$ are the densities of  the $N$ disjoint $(k-1)$-balls in
$B_k(\mathbf{0})$ indexed by $V_k,\, |V_k|=N$. The densities $\{X_{k-1}(i)\}_{i\in V_{k}}$ are
i.i.d. $[0,1]$-valued random variables for each
  $k$,
   \begin{equation}\label{E.2.3} X_{k-1}(i) =\frac{|\mathcal{C}_{k-1}(i)|}{N^{k-1}},\quad  i\in
   V_{k},\end{equation}
   where $\mathcal{C}_{k-1}(i)$ denotes the cluster in the $i$th $(k-1)$-ball.
    For $N$  fixed
   our aim is to  determine what happens as $k\to\infty$. Properties of the graph
   $\mathcal{G}_k(N,\{X_{k-1}(i)\}_{i\in V_k})$ as $k\to\infty$ provide information on $\mathcal{G}^\infty_N$, hence the behaviour of the cluster of
    $B_k(\mathbf{0})$ as $k\to\infty$ will imply a result on
   percolation in $\Omega_N$.

    We denote the distribution of $X_k(i)$ by $\mu_k\in\mathcal{P}([0,1])$.  Then we have for each $k$,
\[ \mu_k=\Phi_k(\mu_{k-1}),\]  where  $\Phi_k$ is a {\em renormalization mapping}  \[\Phi_k:\mathcal{P}([0,1])\to\mathcal{P}([0,1]).\]

 \medskip

Note that $\mu_k$ depends on the edges within a $(k-1)$-ball (which determine $\mu_{k-1}$) and also
on the edges between different $(k-1)$-balls in a $k$-ball, and that $\mu_k$ is an atomic measure.

We now analyse the sequence $\mu_k,\,k\ge1 , $ for the class of connection probabilities
(\ref{E.2.1}), (\ref{E.2.2}).

  In order to prove percolation of positive density it suffices to construct the sequence of $[0,1]$-valued random
variables $X_k,\, k\geq 1,$ satisfying \[ P(X_k> a)= \mu_k((a,1])\text{  for all }a\in(0,1),\] and
existence of $a>0$ such that
\[ \liminf_{k\to\infty} P(X_k>a)>0.\]

Consider two $(k-1)$-balls in a $k$-ball (which are at distance $k$ from each other) having
densities $x_1,x_2$ respectively, and define
\begin{equation}
p(x_{1},x_{2},k)    =P(\text{two }(k-1)\text{-balls in a }k\text{-ball  with densities
}x_1,x_2\text{ are connected}).\end{equation} Note that  $ p(x_{1},x_{2},k)$ is an increasing
function of $x_1$ and $x_2$. Then from (\ref{E.2.1}), (\ref{E.2.2}),
\begin{equation}\label{E.2.4}
p(x_{1},x_{2},k) =1-\left(  1-\frac{C_{2}k^{\alpha}}{N^{2k}}\right)  ^{N^{2(k-1)}%
x_{1}x_{2}},
\end{equation}
and  \begin{equation}\label{E.2.5}  p(x_1,x_2,k)\sim1-e^{-\left( {C_{2}k^{\alpha}}/{N^2}\right)
x_{1}x_{2}}\text{  for large }k.
\end{equation}

\bigskip

\subsubsection{Proof of Theorem \ref{T.1} }
  The idea of the proof is to obtain lower bounds on the expected values of the sequence of random
variables $\{X_k(\mathbf{0})\}_{k\geq 1}$,
 where $X_k(\mathbf{x})$
denotes the {\em density} of  $B_k(\mathbf{x})$, that is $X_k(\mathbf{x})=
{|\mathcal{C}_k(\mathbf{x})|}/{N^k}$, where $\mathcal{C}_k(\mathbf{x})$ denotes the cluster in
$B_k(\mathbf{x})$. Then as remarked above $\mu_k$ is the probability law of $X_k(\mathbf{x})$ which
is independent of $\mathbf{x}$ and the latter will  be suppressed.  By our assumptions
$\{X_k(\mathbf{x}_i)\}$ are independent if for $i\ne j$, $d(\mathbf{x}_i,\mathbf{x}_j)\geq k+1$.

\begin{lemma}\label{L.2} Let $X$ be a random variable with values in $[0,1]$ and $0<a<1$. Then
\begin{equation}\label{E.2.6}
P(X\geq a/2)\geq \frac{E[X]-a/2}{1-a/2}.
\end{equation}

\end{lemma}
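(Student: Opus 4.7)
The plan is to obtain the bound by splitting the expectation of $X$ according to whether $X<a/2$ or $X\geq a/2$, and then using the two available pointwise bounds on $X$ on each event ($X<a/2$ on the first, and $X\leq 1$ on the second).

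Concretely, I would write
\[
E[X]=E[X\,;\,X<a/2]+E[X\,;\,X\geq a/2]\leq \tfrac{a}{2}\,P(X<a/2)+1\cdot P(X\geq a/2),
\]
using $X\in[0,1]$ throughout. Substituting $P(X<a/2)=1-P(X\geq a/2)$ gives
\[
E[X]\leq \tfrac{a}{2}+\bigl(1-\tfrac{a}{2}\bigr)P(X\geq a/2),
\]
and solving for $P(X\geq a/2)$ yields the stated inequality (\ref{E.2.6}). Since $0<a<1$, the denominator $1-a/2$ is strictly positive, so the rearrangement is legitimate; if the numerator $E[X]-a/2$ is non-positive the inequality is trivial because probabilities are non-negative.

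There is no real obstacle here: the lemma is a one-line reverse Markov / Paley--Zygmund--type estimate, and the only thing one must remember to observe is that the upper bound $X\leq 1$ (which holds by hypothesis) is what allows one to bound $E[X\,;\,X\geq a/2]$ by $P(X\geq a/2)$ rather than needing a second moment.
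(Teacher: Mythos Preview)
Your proof is correct and is essentially identical to the paper's: the paper sets $p=P(X\geq a/2)$, writes $E[X]\leq p+\tfrac{a}{2}(1-p)$ in one line, and rearranges. You have simply made the splitting of the expectation explicit, but the argument is the same reverse-Markov estimate.
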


\begin{proof}
\bigskip Let $p=P(X\geq a/2)$. Then
\begin{align}
&  E[X]\leq p+\frac{a}{2}(1-p),\nonumber\end{align} hence
\begin{equation*}
p  \geq \frac{E[X]-a/2}{1-a/2}.
\end{equation*}
\end{proof}

\begin{lemma}\label{L.3} Assume that%
\begin{equation}\label{E.2.7}
\liminf_{k\rightarrow\infty}E[X_{k}]=\liminf_{k\rightarrow\infty
}E[\frac{|\mathcal{C}_{k}|}{N^{k}}]=a>0,
\end{equation} for some $a>0$.
Then percolation of positive density occurs.
\end{lemma}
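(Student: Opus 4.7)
My plan is to combine Lemma~\ref{L.2} with a translation-invariance argument inside each ball $B_k(\mathbf{0})$ to convert the hypothesis $\liminf_k E[X_k] = a > 0$ (which concerns the size of the \emph{maximal} cluster) into a positive lower bound on the probability that the cluster of the specific point $\mathbf{0}$ in $B_k(\mathbf{0})$ is macroscopic, uniformly in $k$. Writing $\mathcal{C}_k^{\mathbf{0}}$ for the cluster of $\mathbf{0}$ in the graph restricted to $B_k(\mathbf{0})$, the family $\{\mathcal{C}_k^{\mathbf{0}}\}$ is nondecreasing in $k$, so a reverse Fatou argument will then force $\mathbf{0}$ to belong to an infinite cluster of positive density with positive probability, which is exactly percolation of positive density.

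Since $E[X_k]\geq 3a/4$ for all large $k$, Lemma~\ref{L.2} with threshold $a/2$ yields $P(X_k\geq a/2)\geq (a/4)/(1-a/2) =: \epsilon_0 > 0$ eventually. The key symmetry step uses that $B_k(\mathbf{0})$ is a subgroup of $\Omega_N$ and that the edge probabilities depend only on the distance, so the law of the subgraph on $B_k(\mathbf{0})$ is invariant under all self-translations of the ball; moreover the uniform tie-breaking in the definition of $\mathcal{C}_k$ commutes with these translations. Consequently $P(\mathbf{x}\in\mathcal{C}_k,\,X_k\geq c)$ is the same for every $\mathbf{x}\in B_k(\mathbf{0})$, and summing over such $\mathbf{x}$ and invoking the deterministic identity $\sum_{\mathbf{x}\in B_k(\mathbf{0})}\mathbf{1}_{\mathbf{x}\in\mathcal{C}_k}=|\mathcal{C}_k|=N^k X_k$ gives
\begin{equation*}
  P(\mathbf{0}\in\mathcal{C}_k,\,X_k\geq c) \;=\; E\bigl[X_k\,\mathbf{1}_{X_k\geq c}\bigr] \;\geq\; c\,P(X_k\geq c).
\end{equation*}
On $\{\mathbf{0}\in\mathcal{C}_k\}\cap\{X_k\geq c\}$ one has $\mathcal{C}_k^{\mathbf{0}}=\mathcal{C}_k$ and hence $|\mathcal{C}_k^{\mathbf{0}}|\geq cN^k$; choosing $c=a/2$ produces $P(|\mathcal{C}_k^{\mathbf{0}}|\geq \tfrac{a}{2}N^k)\geq \tfrac{a}{2}\epsilon_0 =: \epsilon > 0$ for all large $k$.

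Since $\mathcal{C}_k^{\mathbf{0}}\subseteq\mathcal{C}_{k+1}^{\mathbf{0}}$, applying reverse Fatou to the bounded indicator sequence $\{\mathbf{1}_{|\mathcal{C}_k^{\mathbf{0}}|\geq (a/2)N^k}\}$ gives
\begin{equation*}
  P\bigl(|\mathcal{C}_k^{\mathbf{0}}|\geq \tfrac{a}{2}N^k \text{ for infinitely many } k\bigr) \;\geq\; \limsup_{k\to\infty} P\bigl(|\mathcal{C}_k^{\mathbf{0}}|\geq \tfrac{a}{2}N^k\bigr) \;\geq\; \epsilon,
\end{equation*}
and on this event $|\mathcal{C}_k^{\mathbf{0}}|\to\infty$ along a subsequence, so the cluster $\mathcal{C}^{\mathbf{0}}=\bigcup_k \mathcal{C}_k^{\mathbf{0}}$ of $\mathbf{0}$ in $\Omega_N$ is infinite and satisfies $\limsup_k |\mathcal{C}^{\mathbf{0}}\cap B_k(\mathbf{0})|/N^k \geq a/2$, giving the required percolation of positive density. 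The delicate point in this plan is the symmetry identity in the middle step: since $\mathcal{C}_k$ is defined only up to the random choice among maximal clusters, one must verify that this choice respects translation invariance so that the sum-of-indicators identity really reduces to $E[X_k\mathbf{1}_{X_k\geq c}]$; once this is in place, everything else is a routine application of Lemma~\ref{L.2} combined with the monotonicity of clusters of a fixed point.
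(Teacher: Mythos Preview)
Your argument is correct and follows essentially the same route as the paper. Both proofs first invoke Lemma~\ref{L.2} to show that $P(X_k\geq a/2)$ is bounded below, and then use a symmetry (``transitivity'') step to convert this into a lower bound on $P(\mathbf{0}\in\mathcal{C}_k,\,X_k\geq a/2)$; the paper simply cites \cite{DGo2}, Lemma~5.6, for this step and obtains the constant $a^{2}/(2(2-a))$, whereas you spell out the translation-invariance identity $P(\mathbf{0}\in\mathcal{C}_k,\,X_k\geq c)=E[X_k\mathbf{1}_{X_k\geq c}]$ explicitly and then add the monotonicity/reverse-Fatou passage to the infinite cluster, which the paper leaves implicit.
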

\begin{proof}
\noindent Lemma \ref{L.2} implies
\begin{equation*}
\liminf_{k\rightarrow\infty}P(\frac{|\mathcal{C}_{k}|}{N^{k}}\geq a/2)\geq\frac{a}{2-a}.
\end{equation*}

Assume that the expected value of the density of $B_k(\mathbf{0})$ is at least $a/2$ for some $a>0$
and all large $k$. Then  by transitivity (cf. \cite{DGo2}, Lemma 5.6)
\begin{equation}\label{E.2.8}
\liminf_{k\rightarrow\infty}P({\mathbf{0}}\text{ belongs to the cluster of }%
B_k(\mathbf{0}))\geq\frac{a^{2}}{2(2-a)}>0.
\end{equation}

On the other hand if we assume that the density of $B_k(0)$ tends to $0$ w.p.1, then (\ref{E.2.7})
does not hold.

\end{proof}
\medskip

To prove Theorem \ref{T.1} we will show that the hypothesis of Lemma \ref{L.3} is satisfied.

We write the sequence of weights $ X_k,\,k\geq 1$, as follows.
\begin{equation}\label{E.2.9}
 X_{k+1}=\frac{1}{N}\sum_{i=1}^N 1_{i\in \mathcal{C}^*_{k+1}} X_{k,i},
\end{equation}
where  $\{ X_{k,i},i=1,\dots,N\}$ denote the (i.i.d.) densities of the $N$ disjoint $k$-balls in
$B_{k+1}(\mathbf{0})$,
 and $\mathcal{C}^*_{k+1}\subset V_{k+1}$  is the set of
indices of the underlying $k$-balls (some $k$-balls may have null weights). Hence
\begin{equation}\label{E.2.10}
E[X_{k+1}]\leq E[X_k]\;\; \text{for all }k.
\end{equation}

\medskip

 Now consider the random graph $\mathcal{G}_k(N,\{x_1,\dots,x_N\})= \mathcal{G}_k(N,\{p(x_{i},x_{j},k)\})$, where $p(x_i,x_j,k)$ is given by (\ref{E.2.4}).
 Given the densities $( X_{k-1,1},\dots, X_{k-1,N})$ $=(x_{1},
\dots,x_{N})$, then the probability that all $N$ $(k-1)$-balls in a $k$-ball are connected is
\begin{equation}\label{E.2.11}
 P(X_{k}   =\frac{1}{N}(x_{1}+\dots +x_{N})\text{ }|(x_{1},\dots,x_{N})) = P(
\mathcal{G}_k(N,\{p(x_{i},x_{j},k)\})\text{  is connected}).\end{equation}

\noindent If $x_i\geq\varepsilon >0$ for $i=1,\dots,N$, then
\begin{equation}\label{E.2.12} P( X_{k}   =\frac{1}{N}(x_{1}+\dots +x_{N})\text{ }|(x_{1},\dots,x_{N}))
\geq P( \mathcal{G}_k(N,p(\varepsilon,\varepsilon,k)) \text{ is connected}).\end{equation} If all
the $(k-1)$-balls are isolated, then
\begin{equation}\label{E.2.13} P( X_{k}   =\frac{1}{N}(x_{1}\vee\dots\vee
x_{N})|(x_{1},\dots,x_{N}))=\prod_{i<j=1}^N (1-p(x_{i},x_{j},k)),
\end{equation}
where the right side is the probability that no pair of $k$-balls is connected.


By the independence of the densities of different$(k-1)$-balls,
\begin{equation}\label{E.2.14}
P((X_{k-1,1},\dots,X_{k-1,N})=(x_{1},\dots,x_{N}))=\prod_{i=1}^N \mu_{k-1}(x_{i}).
\end{equation}

\medskip

We now can state a stronger form of (\ref{E.2.7}).

\begin{lemma}\label{L.4} For sufficiently large $C_2$ there exists $a>0$ such that as
$n\to\infty$,

\begin{equation}\label{E.2.15} E[X_n]\to a,\end{equation}
\begin{equation}\label{E.2.16} Var[X_n]\to 0,\end{equation}
and
\begin{equation}\label{E.2.17}
\mu_n\Rightarrow \delta_a.
\end{equation}
\end{lemma}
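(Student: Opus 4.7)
My plan is to control the mean $a_k := E[X_k]$ and variance $v_k := \mathrm{Var}(X_k)$ simultaneously via the recursion \eqref{E.2.9}, showing $a_k \to a > 0$ and $v_k \to 0$; given this, \eqref{E.2.17} is automatic because $X_k \in [0,1]$ together with $E[(X_k - a)^2] = v_k + (a_k - a)^2 \to 0$ give $L^2$-convergence, hence weak convergence. Let $A_{k+1}$ denote the event that the ``ball graph'' on the $N$ $k$-balls inside $B_{k+1}(\mathbf 0)$---the random graph on $\{1,\dots,N\}$ with edge probabilities $p(X_{k,i},X_{k,j},k+1)$---is connected. On $A_{k+1}$ the recursion \eqref{E.2.9} reduces to $X_{k+1} = \bar X_k := N^{-1}\sum_i X_{k,i}$, while on $A_{k+1}^c$ one still has $0 \le X_{k+1} \le \bar X_k \le 1$. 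Since $E[\bar X_k] = a_k$ and $\mathrm{Var}(\bar X_k) = v_k/N$ by independence of the $X_{k,i}$, a short computation gives
\[
a_{k+1} \ge a_k - \pi_k, \qquad v_{k+1} \le v_k/N + 2\pi_k, \qquad \pi_k := P(A_{k+1}^c).
\]

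Next I would bound $\pi_k$. First I initialize by choosing $C_2 \ge N^2$, which by the truncation in \eqref{E.2.1} forces every edge inside $B_1(\mathbf 0)$ to be present, so $X_1 \equiv 1$ and $a_1 = 1$, $v_1 = 0$. For $k \ge 1$, fix a threshold $\theta > 0$ and condition on $\{X_{k,i} \ge \theta\text{ for all }i\}$: by monotonicity of the edge probabilities in the weights, each edge of the ball graph is then present with probability at least $p(\theta,\theta,k+1)$, which by \eqref{E.2.5} tends to $1$ superexponentially in $k^\alpha$. A standard $G(N,p)$ disconnectivity estimate (union bound over the nontrivial bipartitions of $\{1,\dots,N\}$) together with a union bound on $\{X_{k,i} < \theta\}$ then yields
\[
\pi_k \le 2^N \exp\bigl(-c(N,\theta,\alpha)\, C_2\, k^\alpha\bigr) + N\, P(X_k < \theta).
\]
The first summand is superexponentially summable in $k$ for every $\theta > 0$ and $\alpha > 0$; the second summand is the crux.

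The main obstacle is to control $P(X_k < \theta)$ sharply enough to close the bootstrap. Naive Chebyshev only gives $P(X_k<\theta) = O(v_k)$, whence $\pi_k = O(v_k) + (\text{summable})$ and the recursion becomes $v_{k+1} \le (1/N + O(N))\, v_k + (\text{summable})$---an \emph{expanding} rather than contracting recursion, useless by itself. The way out is to exploit the recursion itself: on the nested event $A_2 \cap \dots \cap A_k$, $X_k$ is a function of the Bernoulli edge variables inside $B_k(\mathbf 0)$ with single-edge bounded differences of order $N^{1-k}$, so an Efron--Stein / bounded-differences inequality yields a Hoeffding-type tail $P(|X_k - a_k| > t) \le 2\exp(-c N^{k-1} t^2)$ up to additive corrections of order $\sum_{j<k}\pi_j$ coming from the disconnection events. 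Fed back into the previous display this makes $\pi_k$ itself superexponentially small in $k$, and iterating $v_{k+1} \le v_k/N + 2\pi_k$ from $v_1 = 0$ gives $v_n \le 2\sum_{j<n}\pi_j/N^{n-1-j} \to 0$. Simultaneously, $a_{k+1} \ge a_k - \pi_k$ combined with the monotonicity \eqref{E.2.10} forces the non-increasing sequence $a_k$ to converge to some $a > 0$, with $a$ close to $1$ once $C_2$ is large enough that $\sum_k \pi_k$ is as small as desired. The remainder---extracting $v_k \to 0$ and $a_k \to a$ from the unrolled linear recurrences and then reading off \eqref{E.2.17} via $L^2$-convergence---is mechanical; the sharp concentration estimate is the one nontrivial ingredient.
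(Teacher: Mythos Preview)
Your overall architecture---derive the two linear recursions
\[
a_{k+1}\ge a_k-\pi_k,\qquad v_{k+1}\le v_k/N+2\pi_k,
\]
reduce everything to summability of the disconnection probabilities $\pi_k$, and split $\pi_k$ into a superexponential part plus $N\,P(X_k<\theta)$---is exactly the skeleton of the paper's argument (compare \eqref{E.2.48}, \eqref{E.2.49} and \eqref{E.2.47}). The difficulty, as you correctly identify, is the tail $P(X_k<\theta)$.

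The gap is in your proposed resolution of that difficulty. The assertion that ``on $A_2\cap\dots\cap A_k$, $X_k$ has single-edge bounded differences of order $N^{1-k}$'' does not hold. First, a single edge can be a bridge in the cluster of $B_k(\mathbf 0)$; flipping it can change $X_k$ by a constant, not $N^{1-k}$. Second, the events $A_j$ you defined refer only to the ball graphs of the balls containing $\mathbf 0$, whereas for $X_k$ to equal the full iterated average you need \emph{every} one of the $\sim N^{k-1}$ ball graphs inside $B_k(\mathbf 0)$ to be connected; on that much larger event, with your initialization $X_1\equiv 1$, one simply has $X_k=1$ deterministically, so there is nothing to concentrate---you are thrown back to bounding $P(\text{some ball graph disconnected})$, which is the very quantity you were trying to control. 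Third, McDiarmid/Efron--Stein require the Lipschitz bound to hold for all configurations, not merely on a good event; the additive correction ``of order $\sum_{j<k}\pi_j$'' that you invoke does not follow from those inequalities.

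What the paper does instead is bypass concentration altogether and write a direct recursion for $z_n(\varepsilon):=P(X_n<\varepsilon)$ itself (see \eqref{E.2.27} for $N=2$ and \eqref{E.2.46}, \eqref{E.2.47} in general):
\[
z_{n+1}(\varepsilon)\le N\,z_n(\varepsilon)\,(1-r_n(N\varepsilon))^{N-1}+q^N_n(\varepsilon),
\]
where $r_n(N\varepsilon)=P(X_n\ge N\varepsilon)$. The point is that by Lemma~\ref{L.2}, $r_n(N\varepsilon)$ is close to $1$ whenever $E[X_n]$ is bounded away from $N\varepsilon$, which makes the factor $N(1-r_n)^{N-1}<1$ and the recursion \emph{contracting}. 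One then closes the loop by a numerical bootstrap: choose $\varepsilon$ small (e.g.\ $N\varepsilon=0.1$), take $C_2$ large enough that $E[X_{n_0}]$ starts near $1$ and $z_{n_0}$ starts small, and verify inductively that $E[X_n]>2/3$ persists and $\{z_n\}$ is summable (cf.\ \eqref{E.2.31}--\eqref{E.2.32} and \eqref{E.2.43}). This is the nontrivial ingredient that replaces your concentration step; once $\sum z_n<\infty$ and $\sum q^N_n<\infty$ are in hand, your recursions for $a_k$ and $v_k$ finish the job exactly as you describe.
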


We first consider the case $N=2$ to illustrate the idea of the proofs of Lemma \ref{L.4} and
Theorem \ref{T.1}.
\bigskip

\noindent\textbf{Proof of Lemma \ref{L.4} for $N=2$.} Fix $0<\varepsilon<1$ (to be chosen
sufficiently small), and let
\begin{eqnarray}\label{E.2.18}&& q_n(\varepsilon)= \sup\{1-p(x_1,x_2,n):x_1,x_2\geq
\varepsilon\}\\&&=P(\text{two } (n-1)\text{-balls in an }n\text{-ball with densities } \geq
\varepsilon\text{ are not connected})\nonumber
\end{eqnarray}
Then $q_n(\varepsilon)$ is decreasing in $n$ for large $n$, and from (\ref{E.2.4}), (\ref{E.2.5}),
\begin{equation}\label{E.2.19}
1-q_n(\varepsilon)  \geq 1-\left(  1-\frac{C_{2}n^{\alpha}}{N^{2n}}\right)  ^{N^{2(n-1)}%
x_{1}x_{2}}\sim1-e^{-\left(  {C_{2}n^{\alpha}}/{N^2}\right) x_{1}x_{2}}\quad\text{for large }n,
\end{equation}
\begin{equation}\label{E.2.20}
q_n(\varepsilon)\leq (q(\varepsilon))^{n^\alpha},\end{equation} where
\begin{equation}\label{E.2.21}  q(\varepsilon):= e^{-C_2\varepsilon^2/N^2},
\end{equation}
hence \begin{equation}\label{E.2.22} \sum_n q_n(\varepsilon)<\infty.
\end{equation} Let \begin{equation}\label{E.2.23}  z_n(\varepsilon) = P(X_n<\varepsilon).\end{equation} By Lemma
\ref{L.2},

\begin{equation}\label{E.2.24}
r_n(2\varepsilon):=P(X_n\geq 2\varepsilon)\geq \frac{E[X_n]-2\varepsilon}{1-2\varepsilon}.
\end{equation}
\bigskip
To obtain a lower bound for $E[X_n]$ we first note that
\begin{eqnarray}\label{E.2.25}
&& E[X_{n+1}1_{X_{n+1}\geq \varepsilon}]\geq \int_0^1\int_0^1\frac{x+y}{2}1_{x+y\geq
2\varepsilon}F_n(dx)F_n(dy)\cdot p(x,y,n)\\&&\qquad\qquad\qquad\quad\;\;\geq
\int_{\varepsilon}^1\int_{\varepsilon}^1 xF_n(dx)F_n(dy)\cdot(1-q_n(\varepsilon))\nonumber\\
&&\qquad\qquad \qquad\quad\;\;= (1-z_n(\varepsilon))E[X_n 1_{X_n\geq
\varepsilon}]\cdot(1-q_n(\varepsilon)),\nonumber
\end{eqnarray}
where $F_n(dx)$ denotes the distribution of the  random variable $X_n$.
 Therefore for $n>n_0$ (to be taken sufficiently large),
\begin{eqnarray}\label{E.2.26}
 &&\quad E[X_n]\geq
E[X_n1_{X_n\geq\varepsilon}]\\&&\qquad\qquad=\prod_{k=n_0}^{n-1}(1-z_k(\varepsilon))(1-q_k(\varepsilon))E[X_{n_0}1_{X_{n_0}\geq
\varepsilon}].\nonumber \end{eqnarray}

From (\ref{E.2.18}), (\ref{E.2.23}), (\ref{E.2.24}),
\begin{eqnarray}\label{E.2.27}
 \quad && z_{n+1}(\varepsilon) \;   =P(X_{n+1}<\varepsilon)\\
 &&\qquad\qquad\leq(P(X_{n}<\varepsilon))^{2}+2P(X_{n}<\varepsilon)P(\varepsilon<X_{n}%
\leq2\varepsilon)+(P(\varepsilon<X_{n}\leq 2\varepsilon))^{2}q_{n}%
(\varepsilon)\nonumber\\
&&\qquad\qquad
\leq(P(X_{n}<\varepsilon))^{2}+2P(X_{n}<\varepsilon)(1-r_{n}(2\varepsilon))+(1-r_{n}(2\varepsilon)
)^{2}q_{n}(\varepsilon)\nonumber\\
&& \qquad\qquad \leq z_{n}^{2}(\varepsilon)+z_{n}(\varepsilon)2(1-r_{n}(2\varepsilon))+q_{n}(\varepsilon)(1-r_{n}(2\varepsilon))^{2}\nonumber\\
&&\qquad\qquad
=z_{n}(\varepsilon)(z_{n}(\varepsilon)+2(1-r_{n}(2\varepsilon)))+q_{n}(\varepsilon)(1-r_{n}(2\varepsilon))^{2}\nonumber\\
&&\qquad\qquad \leq
z_{n}(\varepsilon)(z_{n}(\varepsilon)+2(1-r_n(2\varepsilon)))+q_n(\varepsilon)\nonumber
\end{eqnarray}
where we have used $P(\varepsilon<X_n\leq 2\varepsilon)\leq 1- r_n(2\varepsilon)$.
\medskip

In order to prove that $\liminf E[X_n] >0$ for sufficiently large $C_2$, from (\ref{E.2.26})
it suffices to verify that we can choose  $\varepsilon, n_{0},z_{n_{0}},E[X_{n_{0}}1_{X_{n_0}\geq \varepsilon}]$ and $C_2$ such that%

\begin{eqnarray}\label{E.2.28}\quad
 \liminf_{n\to\infty} \prod_{k=n_0}^{n-1}(1-z_k(\varepsilon))(1-q_k(\varepsilon))E[X_{n_0}1_{X_{n_0}\geq
\varepsilon}]>0. \end{eqnarray}

Suppose that \begin{equation}\label{E.2.29} z_{n_0}(\varepsilon)\leq \frac{\varepsilon}{2} \text{
 and  }C_2\text{ is large enough so that }q_{n_0}(\varepsilon)<\frac{\varepsilon^2}{4}\end{equation}
(see (\ref{E.2.20}),(\ref{E.2.21})). Note that $C_2$ may depend on $\alpha$ (see (\ref{E.2.20})).
Assume that
\begin{equation}\label{E.2.31}
z_{m}+2(1-r_m(2\varepsilon))<s, \text{  for }  n_0\leq m\leq n,
\end{equation} for some $s\in (0,1)$ such that   \begin{equation}\label{E.2.30} s+\frac{\varepsilon}{2(1-s)}\leq 1.\end{equation}
  Then it follows from (\ref{E.2.27}),(\ref{E.2.29}) that
 for  $n\geq n_0$,
\begin{equation}\label{E.2.32cor}
z_{n+1}(\varepsilon)\leq z_{n_0}s^{n-n_0+1}+\sum_{k=n_0}^n s^{n-k}q_k(\varepsilon)\leq
z_{n_0}s^{n-n_0+1}+ \frac{q_{n_0}(\varepsilon)}{1-s}\leq \frac{\varepsilon}{2},\;,
\end{equation}

 Then by (\ref{E.2.32cor}),  $\{z_n\}_{ n_0\leq m\leq n}$ are bounded by the terms of a summable sequence, namely  (see (\ref{E.2.22})),
\begin{eqnarray}\label{E.2.32}\quad &&\sum_{n=n_0}^\infty z_n(\varepsilon,s)  =
\frac{\varepsilon}{2}\sum_{n=n_0}^\infty s^{n-n_0}+\sum_{n=n_0}^\infty \sum_{k=n_0}^n
s^{n-k}q_k(\varepsilon)\\&&\qquad\qquad\qquad=\frac{\varepsilon}{2(1-s)}+\sum_{k=n_0}^\infty\sum_{n=k}^\infty
s^{n-k}q_k(\varepsilon)=\frac{\varepsilon}{2(1-s)}+ \frac{1}{1-s}\sum_{k=n_0}^\infty
q_k(\varepsilon) <\infty.\nonumber
\end{eqnarray}

\bigskip

We first choose $2\varepsilon =0.1$ and $s=0.775$ which satisfies (\ref{E.2.30}). Then by
(\ref{E.2.24}), $2(1-r_n(2\varepsilon))<0.75$ and $z_{n_0}+2(1-r_n(2\varepsilon))<s$   provided
that $E[X_m]> 2/3$ for $n_0\leq m\leq n$.
 We now choose $n_0$
sufficiently large  so that
\[  \prod_{k=n_0}^{n-1}(1-z_k(\varepsilon,s))(1-q_k(\varepsilon)) > 0.9,\]
and $C_2$ sufficiently large so that $E[X_{n_0}1_{X_{n_0}>\varepsilon}]\geq 3/4$ and $z_{n_0}\leq
{\varepsilon}/{2}$. Note that by choosing $C_2$ sufficiently large we have $z_{n_0}=0$ and
$E[X_{n_0}1_{X_{n_0}>\varepsilon}]=1$, since  $X_{n_0}$ is atomic and positive. By continuity, this
can also be done for connection probabilities strictly less than $1$ but sufficiently close to $1$.
We then can verify from (\ref{E.2.26}) that  $E[X_n]\geq E[X_{n}1_{X_{n}>\varepsilon}]\geq
0.9\times {3}/{4}>2/3$, so we have the consistency condition $E[X_n]>2/3$  for all $n\geq n_0$, and
therefore (\ref{E.2.31}) holds for all $n\geq n_0$.

\bigskip

If the two $n$-balls in the $(n+1)$-ball have density $\geq \varepsilon$, then by considering the
events that the two $n$-balls are connected, and that they are not connected, it is easy to show
that
\begin{equation}\label{E.2.33}
E[X_{n+1}]\geq  E[X_n](1-q_n(\varepsilon))  +q_n(\varepsilon)
\frac{E[X_n]}{2}=E[X_n](1-\frac{q_n(\varepsilon)}{2}),\end{equation} which together with
(\ref{E.2.10}), (\ref{E.2.22}) and  Lemma \ref{L.3} proves (\ref{E.2.15})
($\{E[X_n]\}$ is a Cauchy sequence), and also
\begin{equation}\label{E.2.34} Var[X_{n+1}]\leq \frac{1}{2}E[X^2_n]+\frac{1}{2}(E[X_n])^2 - (E[X_n])^2(1-\frac{q_n(\varepsilon)}{2})^2
\leq \frac{1}{2}Var[X_n]+q_n(\varepsilon), \end{equation} which proves  (\ref{E.2.16}) and  then (\ref{E.2.33}), (\ref{E.2.34})
prove (\ref{E.2.17}).\quad $\blacksquare$

\strut

We now modify the argument with $N=2$  to prove the theorem for general $N$. To prepare, we begin
with some lemmas.


\begin{lemma}\label{L.con}  Consider the Erd\H{o}s-R\'enyi graph $\mathcal{G}(N,1-q)$.   Let $P(N,q)$ denote the probability that the
graph is connected.  Then as $q\to 0$, \begin{equation}\label{E.2.35}  P(N,q)\geq 1-
C(N)q^{N-1},\end{equation} where $C(N)$ is a constant such that $C(N)\sim N$ for large $N$
\end{lemma}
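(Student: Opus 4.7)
The plan is to upper bound the disconnection probability $1 - P(N,q)$ by a first-moment / union bound over cuts, and then check that the leading-order coefficient in $q^{N-1}$ is $\sim N$. The Erd\H{o}s--R\'enyi graph $\mathcal{G}(N, 1-q)$ is disconnected if and only if its vertex set admits a partition into two nonempty parts with no crossing edges; by always taking $A$ to be the smaller part, we may index disconnections by subsets of size $1 \leq |A| \leq \lfloor N/2 \rfloor$.

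For a fixed $A$ with $|A| = k$, the $k(N-k)$ potential cross edges are independent and each is absent with probability $q$, so the probability of no crossing edges is $q^{k(N-k)}$. The union bound yields
\[
1 - P(N,q) \;\leq\; \sum_{k=1}^{\lfloor N/2 \rfloor} \binom{N}{k}\, q^{k(N-k)}.
\]
The $k = 1$ term equals $N q^{N-1}$ and will be the dominant contribution as $q \to 0$. For $k \geq 2$ the exponent satisfies $k(N-k) \geq 2(N-2) = (N-1) + (N-3)$, since $k(N-k)$ is minimized on $\{2,\ldots,\lfloor N/2\rfloor\}$ at the endpoint $k = 2$. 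Hence each such term is at most $\binom{N}{k}\, q^{N-1} q^{N-3}$, and summing gives a bound $\leq 2^N q^{N-3}\cdot q^{N-1}$, which is $o(q^{N-1})$ as $q \to 0$.

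Combining, for all $q$ smaller than a threshold depending on $N$ we get
\[
1 - P(N,q) \;\leq\; \bigl(N + 2^N q^{N-3}\bigr)\, q^{N-1} \;\leq\; C(N)\, q^{N-1},
\]
so one may take, e.g., $C(N) = N + 1$, and clearly $C(N) \sim N$ as $N \to \infty$.

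The argument is essentially a routine first-moment calculation; there is no substantive obstacle. The only thing worth a moment's care is verifying that the $k \geq 2$ contributions are genuinely negligible relative to $N q^{N-1}$, which is the reason for restricting to $|A| \leq \lfloor N/2\rfloor$ (rather than running $k$ up to $N-1$, which would cost an extraneous factor of $2$ in the leading constant and spoil the asymptotic $C(N)\sim N$ instead of $\sim 2N$).
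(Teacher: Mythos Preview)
Your proof is correct. It differs from the paper's only in the decomposition used for the union bound: the paper invokes Gilbert's recursion $P(N,q)=1-\sum_{k=1}^{N-1}\binom{N-1}{k-1}P(k,q)\,q^{k(N-k)}$ (summing over the size $k$ of the component containing a fixed vertex) and then bounds $P(k,q)\leq 1$, whereas you bound the disconnection event directly by a union over cuts $A$ with $|A|\leq\lfloor N/2\rfloor$. Both routes produce the same leading term $Nq^{N-1}$; in the paper's sum it arises as $1+(N-1)$ from the two extreme terms $k=1$ and $k=N-1$, while in yours it comes from the single term $k=1$. Your argument is marginally more self-contained since it avoids citing the recursion formula, while the paper's identity is exact and could in principle be pushed further; for the purpose of this lemma the two are interchangeable.
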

\begin{proof}  Recall the basic formula (\cite{Gil}, equation (4), also see  \cite{Bo}, p. 198)
\begin{equation}\label{E.2.36}  P (N,q)=1-\sum_{k=1}^{N-1} \left(  \begin{array}{c}N-1 \\k-1\end{array}
 \right)P(k,q)q^{k(N-k)}\geq 1 -\sum_{k=1}^{N-1} \left(  \begin{array}{c}N-1 \\k-1\end{array}
 \right)q^{k(N-k)}.\end{equation}
The result follows by noting that the dominant term in $1-P(N,q)$  as $q\to 0$ is given by the
smallest power of $q$ on the right side of (\ref{E.2.36}) which is $N-1$ (from the terms $k=1$ and
$k=N-1$).
\end{proof}

\begin{corollary}\label{C.6}
 Consider the graph  $\mathcal{G}_k(N,\{x_1,\dots,x_N\})$. If $x_i\geq
\varepsilon$ for all $i$,  then for large $k$ \begin{equation}\label{E.2.37}
P(\mathcal{G}_k(N,\{x_1,\dots,x_N)\}) \text { is connected})\geq 1-
q^N_k(\varepsilon),\end{equation} where \begin{equation}\label{E.2.38} q^N_k(\varepsilon)=
C(N)(q_k(\varepsilon))^{N-1}\end{equation} with \begin{equation}\label{E.2.39}
q_k(\varepsilon)=q(N,k,\varepsilon):= e^{-C_2\varepsilon^2\, k^\alpha/N^2}\leq C_3\gamma^{k^\alpha},\end{equation}
with  constants $C_3>0$ and $0<\gamma<1$. Hence
\begin{equation}\label{E.2.39+} \sum_k q^N_k(\varepsilon)<\infty.\end{equation}
\end{corollary}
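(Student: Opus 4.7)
The plan is to combine Lemma \ref{L.con} with a coupling/monotonicity argument and then invoke the decay estimates for $q_k(\varepsilon)$ already derived in (\ref{E.2.18})--(\ref{E.2.21}) and in (\ref{E.2.4})--(\ref{E.2.5}). The whole statement is essentially a packaging of these ingredients, so the work is in making sure the pieces fit together cleanly rather than in any new combinatorics.

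First I would observe that if $x_i \geq \varepsilon$ for all $i$, then by monotonicity of $p(x_1,x_2,k)$ in each of its first two arguments (noted right after (\ref{E.2.4})), the probability that any particular pair of $(k-1)$-balls in $\mathcal{G}_k(N,\{x_1,\dots,x_N\})$ fails to be connected by a direct edge is at most the quantity $q_k(\varepsilon)$ defined in (\ref{E.2.18}). Since the edges are conditionally independent given the weights, this gives a pairwise stochastic lower bound: $\mathcal{G}_k(N,\{x_1,\dots,x_N\})$ dominates the homogeneous Erd\H os--R\'enyi graph $\mathcal{G}(N, 1-q_k(\varepsilon))$ with respect to the increasing event of connectedness. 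Consequently the left-hand side of (\ref{E.2.37}) is bounded below by $P(N, q_k(\varepsilon))$, and Lemma \ref{L.con} applied with $q = q_k(\varepsilon)$ yields, for all $k$ large enough that $q_k(\varepsilon)$ is small,
\begin{equation*}
P(\mathcal{G}_k(N,\{x_1,\dots,x_N\}) \text{ is connected}) \geq 1 - C(N)(q_k(\varepsilon))^{N-1},
\end{equation*}
which is (\ref{E.2.37}) with the notation (\ref{E.2.38}).

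Next I would turn to the exponential estimate (\ref{E.2.39}). Starting from (\ref{E.2.18})--(\ref{E.2.19}) and using $1-(1-u)^m \geq 1 - e^{-mu}$ (equivalently $1-e^{-mu} \leq mu$ going the other way), one gets $q_k(\varepsilon) \leq e^{-C_2 \varepsilon^2 k^\alpha/N^2}$ for all $k$ large enough that $C_2 k^\alpha/N^{2k} \leq 1$; for the finitely many smaller $k$, the same bound of the form $C_3 \gamma^{k^\alpha}$ is achieved by inflating the constant $C_3$. Setting $\gamma = e^{-C_2\varepsilon^2/N^2} \in (0,1)$ and absorbing the initial terms into $C_3$ gives (\ref{E.2.39}).

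Finally, summability (\ref{E.2.39+}) follows at once: $q^N_k(\varepsilon) \leq C(N) C_3^{N-1} \gamma^{(N-1)k^\alpha}$, and $\sum_k \gamma^{(N-1)k^\alpha}$ converges for any $\gamma \in (0,1)$ and $\alpha > 0$ by the integral test (since $k^\alpha \to \infty$ faster than $\log k$). The main obstacle, if one can call it that, is merely the bookkeeping around the asymptotic $\sim$ in (\ref{E.2.5}) and (\ref{E.2.19}): one must translate the asymptotic statement into a genuine inequality valid for all $k$ by paying a constant $C_3$, and check that the "for large $k$" in Lemma \ref{L.con} is harmless because $q_k(\varepsilon) \to 0$.
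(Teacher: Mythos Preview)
Your proof is correct and follows essentially the same route as the paper's: bound each pairwise non-connection probability by $q_k(\varepsilon)$ via monotonicity of $p(x_1,x_2,k)$, then apply Lemma~\ref{L.con} to the dominated Erd\H os--R\'enyi graph $\mathcal{G}(N,1-q_k(\varepsilon))$. The paper's own proof is a two-line sketch that simply cites (\ref{E.2.19})--(\ref{E.2.21}) for the bound (\ref{E.2.40}) and then invokes Lemma~\ref{L.con}; your version makes the coupling and the handling of small $k$ (via the constant $C_3$) explicit, which is a welcome clarification rather than a different argument.
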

\begin{proof}  By (\ref{E.2.19}),(\ref{E.2.20}),(\ref{E.2.21}) the probability that  two $(k-1)$-balls in a $k$-ball with respective densities
 $x_1,x_2\geq \varepsilon$  are
connected is given by

\begin{equation}\label{E.2.40} 1-q_k(\varepsilon) \geq 1-e^{- {C_{2}\varepsilon^2 k^{\alpha}}/{N^2} }\quad\text{ for large }k.
\end{equation}
The result then follows by Lemma \ref{L.con}.
\end{proof}
\medskip

For the $N$ $(k-1)$-balls in a $k$-ball, define \begin{equation}p(x_1,\dots,x_N,n) =P(\text{all the
}(k-1)\text{-balls with densities }x_1,\dots,x_N \text{ are connected}).\end{equation} Then by
Corollary \ref{C.6} and analogously as in the case $N=2$ (see (\ref{E.2.25})),

\begin{eqnarray}\label{E.2.41}\quad
&&E[X_{n+1}1_{X_{n+1}\geq \varepsilon}]\;\geq  \int_0^1\dots\int_0^1\frac{\sum_{i=1}^N x_i}{N}
1_{\sum x_i\geq N\varepsilon}\prod_{i=1}^N F_n(dx_i)\cdot
p(x_1,\dots,x_N,n)\\&&\qquad\qquad\qquad\qquad\geq \frac{1}{N}\sum_{i=1}^N
\int_{\varepsilon}^1\dots \int_{\varepsilon}^1 x_i F_n(dx_i)\prod_{j\ne i} F_n(dx_j)\cdot(1-q^N_n(\varepsilon))\nonumber\\
&& \qquad\qquad\qquad\qquad= (1-z_n(\varepsilon))^{N-1}E[X_n 1_{X_n\geq
\varepsilon}]\cdot(1-q^N_n(\varepsilon)),\nonumber
\end{eqnarray}
where
\begin{equation}\label{E.2.42} z_n(\varepsilon)=P(X_n< \varepsilon).
\end{equation}

 Therefore for $n>n_0$,
\begin{equation}\label{E.2.43}
 E[X_n]\geq
E[X_n1_{X_n\geq\varepsilon}]=\prod_{k=n_0}^{n-1}(1-z_k(\varepsilon))^{N-1}(1-q^N_k(\varepsilon))E[X_{n_0}1_{X_{n_0}\geq
\varepsilon}]. \end{equation}

Let
\begin{equation}\label{E.2.44} r_n(N\varepsilon)=P(X_n\geq N\varepsilon).\end{equation}
Then by Lemma \ref{L.2},
\begin{equation}\label{E.2.45}
r_n(N\varepsilon)\geq \frac{E[X_n]-N\varepsilon}{1-N\varepsilon}.
\end{equation}

\begin{lemma}
\begin{equation}\label{E.2.46} P(X_{n+1}<\varepsilon)\leq
NP(X_n<\varepsilon)(1-r_n(N\varepsilon))^{N-1}+q^N_n(\varepsilon).\end{equation}
 \end{lemma}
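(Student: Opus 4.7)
My plan is to mimic the $N=2$ case analysis in (\ref{E.2.27}), decomposing $\{X_{n+1}<\varepsilon\}$ according to whether or not every one of the $N$ constituent $n$-balls has density at least $\varepsilon$. Set $E=\bigcap_{i=1}^{N}\{X_{n,i}\ge\varepsilon\}$ and split
\[
P(X_{n+1}<\varepsilon)=P(X_{n+1}<\varepsilon,E)+P(X_{n+1}<\varepsilon,E^{c}).
\]

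For the first piece I would invoke Corollary \ref{C.6}: on $E$ the ultrametric graph $\mathcal{G}_{n+1}$ is connected with probability at least $1-q^N_n(\varepsilon)$, and on connection $\mathcal{C}^{*}_{n+1}=\{1,\dots,N\}$, whence $X_{n+1}=\frac{1}{N}\sum_{i}X_{n,i}\ge\varepsilon$. Thus $\{X_{n+1}<\varepsilon\}\cap E$ forces disconnection and yields $P(X_{n+1}<\varepsilon,E)\le q^N_n(\varepsilon)$. For the second piece, the key deterministic observation (mirroring the $N=2$ argument) is that no ball $j$ with $X_{n,j}\ge N\varepsilon$ can belong to $\mathcal{C}^{*}_{n+1}$ on the event $\{X_{n+1}<\varepsilon\}$, since otherwise $X_{n+1}\ge X_{n,j}/N\ge\varepsilon$. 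Following the template of (\ref{E.2.27}) I would contain
\[
\{X_{n+1}<\varepsilon\}\cap E^{c}\subseteq\bigcup_{i=1}^{N}\Bigl(\{X_{n,i}<\varepsilon\}\cap\{X_{n,j}<N\varepsilon\text{ for all }j\ne i\}\Bigr),
\]
up to events already absorbed in the $q^N_n(\varepsilon)$ term, and then use a union bound over $i$, the independence of $X_{n,1},\dots,X_{n,N}$, and symmetry to conclude
\[
P(X_{n+1}<\varepsilon,E^{c})\le N\,z_n(\varepsilon)\,(1-r_n(N\varepsilon))^{N-1}.
\]
Adding the two bounds produces the stated inequality.

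The step I expect to be the main obstacle is the set inclusion on $E^{c}$: mixed configurations, in which a small ball coexists with a large ball that is segregated into a separate maximal cluster, must be shown to be either already captured by one of the $N$ union-bound events or else swallowed by the disconnection contribution $q^N_n(\varepsilon)$, paralleling the bookkeeping of the third term in (\ref{E.2.27}). Once this accounting is carried out, the remaining steps are a direct application of Corollary \ref{C.6} together with the independence and exchangeability of the densities $\{X_{n,i}\}$.
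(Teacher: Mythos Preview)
Your decomposition and overall strategy match the paper's exactly. The gap is in the way you phrase the ``key deterministic observation'': you state it only for indices $j\in\mathcal{C}^{*}_{n+1}$, and this is precisely what then makes you worry about ``mixed configurations'' in which a large ball is segregated outside $\mathcal{C}^{*}_{n+1}$. In fact the observation holds unconditionally: for every $j$, the cluster of the $j$-th $n$-ball is itself a connected subset of the $(n+1)$-ball, so the \emph{maximal} cluster of the $(n+1)$-ball has size at least $|\mathcal{C}_n(j)|=X_{n,j}N^{n}$, giving $X_{n+1}\ge X_{n,j}/N$ for all $j$ (compare (\ref{E.2.13}), which records the extreme case when all $n$-balls are isolated). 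Thus $\{X_{n+1}<\varepsilon\}$ already forces $X_{n,j}<N\varepsilon$ for \emph{every} $j$, with no reference to membership in $\mathcal{C}^{*}_{n+1}$.

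With this in hand your set inclusion on $E^{c}$ holds exactly (no ``up to $q^N_n(\varepsilon)$'' hedge is needed), and the union bound together with independence of the $X_{n,i}$ gives $P(X_{n+1}<\varepsilon,E^{c})\le N\,z_n(\varepsilon)\,(1-r_n(N\varepsilon))^{N-1}$ directly. This is exactly how the paper proceeds: it first notes that a single $n$-ball with density $\ge N\varepsilon$ forces $X_{n+1}\ge\varepsilon$, then writes
\[
\{X_{n+1}<\varepsilon\}\subset\{\text{all densities }<N\varepsilon\}\cap\bigl[\{\text{some density }<\varepsilon\}\cup\{\text{all }\ge\varepsilon\text{ and not connected}\}\bigr],
\]
and bounds the two pieces separately. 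Your anticipated obstacle is therefore not an obstacle at all once the observation is stated in the correct strength.
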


\begin{proof}
We first note that if the density of one of the $n$-balls in the $(n+1)$-ball is larger than
$N\varepsilon$ then $X_{n+1}>\varepsilon$.  Second, if the densities of all the balls  are larger
than $ \varepsilon$ and the balls  are connected, then $X_{n+1} > \varepsilon$. Therefore
\begin{align*}\{X_{n+1}<\varepsilon\}\subset & \{\text{densities of all }n\text{-balls }<
N\varepsilon\}\cap \\& \left[\{\text{densities all }n\text{-balls }> \varepsilon\text{ and not
connected}\}\cup \{\text{density of at least one }n\text{-ball }<\varepsilon\}\right].\nonumber
\end{align*}

Then

\begin{align*}P(X_{n+1}<\varepsilon)\leq &
NP(X_n<\varepsilon)(P(X_n< N\varepsilon))^{N-1}\\&+(P(\varepsilon\leq X_n<
N\varepsilon))^{N}P(\mathcal{G}_n(N,\{\varepsilon,\dots,\varepsilon\})\text{  not connected})\\
&\leq NP(X_n<\varepsilon)(P(X_n< N\varepsilon))^{N-1}+(P(\varepsilon\leq X_n<
N\varepsilon))^{N}q^N_n(\varepsilon)\\
&\leq NP(X_n<\varepsilon)(1-r_n(N\varepsilon))^{N-1}+(1-r_n(N\varepsilon))^{N}q^N_n(\varepsilon)
\\&\leq
NP(X_n<\varepsilon)(1-r_n(N\varepsilon))^{N-1}+q^N_n(\varepsilon).
 \end{align*}
The first summand on the right corresponds to the case that at least one density
 $<\varepsilon$ and all densities  $<N\varepsilon$; the second summand corresponds to the case in which all densities
 $x_i$ are in $ [\varepsilon,N\varepsilon)$ and the balls not connected.
\end{proof}
\bigskip

From (\ref{E.2.41}),(\ref{E.2.45}), \begin{equation}\label{E.2.47} z_{n+1}(\varepsilon) \leq
Nz_n(\varepsilon)(1-r_n(N\varepsilon))^{N-1}+q^N_n(\varepsilon).\end{equation}

\bigskip
\noindent Using (\ref{E.2.40})-(\ref{E.2.47}) we proceed analogously as in the case $N=2$. We can
then choose $\varepsilon={0.1}/{N}, E[X_{n_0}1_{X_{n_0}\geq\varepsilon}]\geq {3}/{4}, $ so that
(using inequality (\ref{E.2.45})),
 \begin{equation}\label{E.2.51+} N(1-r_n(N\varepsilon))^{N-1}\leq
2(1-r_n(N\varepsilon))<0.75,\end{equation} provided that $E[X_{n}1_{X_{n}\geq\varepsilon}]\geq {2}/{3}$.
Finally, we can then choose $ n_0,z_{n_0}$ and $C_2$ so that the sequence $\{ z_n(\varepsilon)\}$
is summable as in the case $N=2$ and we have
\[ \prod_{k=n_0}^{\infty}(1-z_k(\varepsilon))^{N-1}(1-q^N_k(\varepsilon))>0.9\] so that  $E[X_n1_{X_n\geq \varepsilon}]\geq {2}/{3}$
for all $n\geq n_0$ as in the case $N=2$. \quad $\blacksquare$

\begin{remark}  In the case $\alpha >1$, combining (\ref{E.2.47}),  (\ref{E.2.51+}) with (\ref{E.2.39}) we obtain
\begin{equation}\label{E.2.52}
z_n(\varepsilon)\leq c\,\zeta^n\end{equation} with constants $c>0,\; 0<\zeta<1$.
\end{remark}
\bigskip

\noindent \textbf{Proof of Lemma \ref{L.4} for general $N$.} This follows from (\ref{E.2.10}) and
the next inequalities which are  analogous to (\ref{E.2.33}), (\ref{E.2.34}), that can be proved
similarly to the case $N=2$  by considering the events that all the $N$ $n$-balls are connected, or
not, and using (\ref{E.2.37}), (\ref{E.2.38}), (\ref{E.2.39}), (\ref{E.2.39+}):
\begin{equation}\label{E.2.48} E[ X_{n+1}]\geq E[X_n] +O(q^N_n(\varepsilon)),\end{equation}
and
\begin{equation}\label{E.2.49} Var[X_{n+1}]\leq \frac{1}{N}Var[X_n]
+O(q^N_n(\varepsilon))\end{equation} as $n\to\infty$.

\bigskip

The proof of percolation  is then finished as in the case $N=2$ using the previous formulas, and
the uniqueness follows from Theorem 1.2 in \cite{KMT}. This completes the proof of Theorem
\ref{T.1}. $\blacksquare$

\section{Properties of the percolation clusters}

In this section we will obtain some properties of the percolation clusters that will be used for
studying behaviour or random walks on the clusters. We  will  use parts of the scheme of
\cite{DGo2} referred to in the introduction in the case $\delta =1$, that is,
\begin{equation}\label{E.3.1} k_n=\lfloor K n\log n\rfloor,\; n=1,2,\dots, \end{equation}
\begin{eqnarray}
\label{E.3.2} c_{k_n} &=&  C+a \log n \cdot n^{b \log N},
\end{eqnarray}
$K>0$, $C\geq 0$, $a>0$, $b> 0$, $c_{k_n} \leq c_k \leq c_{k_{n+1}}$ for $k_n<k<k_{n+1}$.
(\ref{E.3.1}) implies that
\begin{equation}\label{E.3.3}  k_{n+1} -k_n\sim K\log n\text{   as  }n\to\infty.\end{equation}

\subsection{Cutsets for $\delta =1$}

A {\em cutset} of a graph is a set of edges in the graph which, if removed, disconnects the graph.
\medskip

We consider the percolation cluster of $\Omega_N$ in the case $\delta=1$ with $C_2>0$. We will
construct a sequence of cutsets for the cluster that will be used to prove recurrence of the random
walk on the cluster in the case $\alpha \leq 1$.

The following argument holds with $K=1$
 (in the special case $N=2$ we need a minor modification which we omit here).  First recall that by
 \cite{DGo2} (Lemma 5.2 with $K=1$) we have the following result (this does not need the condition
 $2/\log N<K<b)$.

\begin{lemma}\label{L.8} For $0<b<2-{1}/{\log N}$,
 with probability one there exists $n_0$ such that  for all $n\geq n_0$ there is no skipping over two successive annuli
$(k_n,k_{n+1}]$, that is, there are no single edge connections between the annulus $(k_{n-1},k_n]$
and the annuli $(k_{n+2},k_{n+3}],(k_{n+3},k_{n+4}],\;  etc$.

\end{lemma}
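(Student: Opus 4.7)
The plan is to apply the Borel-Cantelli lemma. Let $E_n$ denote the event that there exists at least one edge joining some point of the annulus $(k_{n-1},k_n]$ to some point of $\bigcup_{m\geq n+2}(k_m,k_{m+1}]$. If $\sum_{n}P(E_n)<\infty$, then with probability one only finitely many $E_n$ occur, which gives the claim. Since $P(E_n)$ is at most the expected number of such ``skipping'' edges in $\Omega_N$, it suffices to bound that expectation.

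First I would fix $\mathbf{x}$ with $d(\mathbf{0},\mathbf{x})\in(k_{n-1},k_n]$. By the strong triangle inequality, any $\mathbf{y}$ with $d(\mathbf{0},\mathbf{y})=d>k_n$ satisfies $d(\mathbf{x},\mathbf{y})=d$. There are fewer than $N^{d}$ points at distance exactly $d$ from $\mathbf{0}$, each connected to $\mathbf{x}$ with probability $c_d/N^{2d}$, so by monotonicity of $c_k$ and the geometric series the expected number of neighbours of $\mathbf{x}$ in the annulus $(k_m,k_{m+1}]$ is bounded by
\[
\sum_{d=k_m+1}^{k_{m+1}} N^{d}\cdot \frac{c_d}{N^{2d}}\;\leq\; c_{k_{m+1}}\sum_{d=k_m+1}^{\infty}N^{-d}\;\leq\;\frac{2\,c_{k_{m+1}}}{N^{k_m}}.
\]
Multiplying by $N^{k_n}$ (an upper bound for the size of $(k_{n-1},k_n]$) gives an expected number of bad edges between annulus $n$ and annulus $m$ of at most $2\,c_{k_{m+1}}\,N^{k_n-k_m}$.

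Next I would sum this over $m\geq n+2$. With $k_n=\lfloor n\log n\rfloor$ and $m=n+j$, $j\geq 2$ (and $j$ not too large), one has $k_m-k_n\sim j\log n$ and $c_{k_{m+1}}\lesssim a\log n\cdot n^{b\log N}$, so the expected count at level $m=n+j$ is of order $\log n\cdot n^{(b-j)\log N}$. This is geometric in $j$ with common ratio $n^{-\log N}\to 0$, so the series is dominated by its $j=2$ term:
\[
P(E_n)\;\leq\; O\!\left(\log n\cdot n^{(b-2)\log N}\right).
\]
The range $m\gg n$ contributes negligibly because $k_m-k_n$ grows super-linearly in $m$, so $N^{k_n-k_m}$ decays super-exponentially and easily absorbs the polynomial $c_{k_{m+1}}$.

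Finally, the condition $(b-2)\log N<-1$ is equivalent to $b<2-1/\log N$, whence $\sum_n \log n\cdot n^{(b-2)\log N}<\infty$ and Borel-Cantelli yields the conclusion. The main technical care lies in pairing the polynomial growth of $c_{k_{m+1}}$ against the exponential factor $N^{k_n-k_m}$ uniformly in $m\geq n+2$, and in checking that the $O(1)$ corrections introduced by the floor function in $k_n=\lfloor Kn\log n\rfloor$ (here $K=1$) do not affect the decisive exponents.
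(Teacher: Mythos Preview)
Your proof is correct and is precisely the standard Borel--Cantelli argument one expects here: bound $P(E_n)$ by the expected number of skipping edges, use ultrametricity to compute $d(\mathbf{x},\mathbf{y})=d(\mathbf{0},\mathbf{y})$ for $\mathbf{y}$ outside $B_{k_n}(\mathbf{0})$, and then identify the dominant $j=2$ term $\log n\cdot n^{(b-2)\log N}$, which is summable exactly when $b<2-1/\log N$. The paper itself does not reprove this lemma but simply invokes \cite{DGo2} (Lemma 5.2 with $K=1$); your argument reconstructs that result along the same lines.
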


\begin{lemma}\label{L.9} For any $\alpha >0$ there exists  a sequence of finite cutsets $\Pi_j,\,j\geq1$, for the
percolation cluster that are pairwise disjoint for large $j$, and such that
\begin{equation}\label{E.3.4} E|\Pi_j|\leq \frac{\kappa_j}{N}\quad \text{for large }j,
\end{equation}
where
\begin{equation}\label{E.3.5} \kappa_j=a2^\alpha\log j \cdot j^\alpha,
\end{equation}
with $a$ as in (\ref{E.3.2}).

\end{lemma}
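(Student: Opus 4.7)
The plan is to take as $\Pi_j$ the natural ``ball-boundary'' cutset
\[ \Pi_j := \{(\mathbf{x},\mathbf{y}) \in \mathcal{E}_\infty : d(\mathbf{x},\mathbf{0}) \leq k_j < d(\mathbf{y},\mathbf{0})\}, \]
intersected (if one wants) with the edge set of the infinite cluster. Every infinite self-avoiding path from $\mathbf{0}$ in the cluster must leave $B_{k_j}(\mathbf{0})$, so $\Pi_j$ is manifestly a cutset. There remain two things to verify: a uniform upper bound on $E|\Pi_j|$, and eventual pairwise disjointness.

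The expected-size estimate is pure enumeration. For fixed $\mathbf{x} \in B_{k_j}(\mathbf{0})$ and $\ell > k_j$, ultrametricity guarantees that any $\mathbf{y}$ with $d(\mathbf{x},\mathbf{y}) = \ell$ satisfies $d(\mathbf{y},\mathbf{0}) = \ell > k_j$, so such $\mathbf{y}$ lies outside $B_{k_j}(\mathbf{0})$ and there are $N^\ell - N^{\ell-1}$ of them. Summing the connection probability $c_\ell/N^{2\ell}$ over $\mathbf{x}\in B_{k_j}(\mathbf{0})$ and $\ell > k_j$ collapses one factor $N^{k_j}$ and yields
\[ E|\Pi_j| \;\leq\; (1 - 1/N)\sum_{\ell = k_j + 1}^{\infty} \frac{c_\ell}{N^{\ell - k_j}}, \]
a geometric series dominated by its initial term. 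Using $c_\ell \leq c_{k_{j+1}}$ on $(k_j,k_{j+1}]$ together with (\ref{E.3.2}), the dominant contribution is $(1-1/N)\, c_{k_{j+1}}/N \leq (1-1/N)\, a\log(j+1)\,(j+1)^\alpha/N$ (identifying the exponent $\alpha$ with the parameter $b\log N$ of the scheme), which for large $j$ is bounded by $\kappa_j/N$ with $\kappa_j = a\,2^\alpha \log j \cdot j^\alpha$. Finiteness of $\Pi_j$ almost surely follows at once.

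For disjointness I would invoke Lemma \ref{L.8}: almost surely, for $j \geq n_0$, no edge skips from $(k_{j-1},k_j]$ to any annulus past $(k_{j+1},k_{j+2}]$. Hence every edge of $\Pi_j$ has its outer endpoint inside $B_{k_{j+2}}(\mathbf{0})$, while every edge of $\Pi_{j'}$ with $j' \geq j + 3$ has an endpoint outside $B_{k_{j+3}}(\mathbf{0}) \supseteq B_{k_{j+2}}(\mathbf{0})$. Thus $\Pi_j$ and $\Pi_{j'}$ share no edge whenever $|j - j'| \geq 3$ and both indices are large, and passing to a sub-sequence $j_i := 3i$ (and re-labelling, absorbing the factor $3$ into the crude constants) produces a genuinely pairwise disjoint family with the stated expected-size bound.

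The principal obstacle is the tension between the quantitative bound with the prefactor $a\,2^\alpha$ and the strict disjointness: the natural cutsets based on every $k_j$ have the optimal expected size but consecutive ones may overlap, whereas insisting on disjointness forces a sparser indexing that must not corrupt the prefactor. The crude bound $(j+1)^\alpha \leq 2^\alpha j^\alpha$ is where the factor $2^\alpha$ in $\kappa_j$ comes from, and any reindexing $j \mapsto cj$ needed for disjointness can be swallowed by this same crude inequality. A secondary technicality is that the no-skipping property of Lemma \ref{L.8} is only almost sure and only past a random threshold $n_0$, which is why both the disjointness and the size estimate are claimed ``for large $j$''.
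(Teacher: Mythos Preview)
Your approach is essentially the paper's, with a cosmetic difference in bookkeeping. The paper \emph{doubles the index from the outset}: it sets $I_j=(k_{2j},k_{2(j+1)}]$ and takes $\Pi_j$ to be the set of edges between $I_j$ and $I_{j+1}$. Lemma~\ref{L.8} then says (for large $j$) that the $I_j$'s only talk to their immediate neighbours, so pairwise disjointness of the $\Pi_j$'s is built in and no after-the-fact thinning is needed. The expectation computation is the same as yours, but the sum is over the finite range $k_{2(j+1)}<\ell\le k_{2(j+2)}$, and the bound $c_\ell\lesssim c_{k_{2(j+2)}}\sim a\log(2j)\,(2j)^{b\log N}\le a\,2^\alpha\log j\cdot j^\alpha=\kappa_j$ (using $b\log N<\alpha$) is where the factor $2^\alpha$ actually arises.

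Two small points on your version. First, your sub-sequence $j_i=3i$ is slightly wasteful: by Lemma~\ref{L.8} the outer endpoint of any edge in your $\Pi_j$ lies in $B_{k_{j+2}}(\mathbf 0)$, so $|j-j'|\ge 2$ already forces disjointness, and taking $j_i=2i$ reproduces exactly the paper's $\kappa_j$ with constant $a\,2^\alpha$. With step $3$ you would get $a\,3^{b\log N}$, which does \emph{not} automatically fit under $a\,2^\alpha$ unless you invoke the freedom in the choice of $b$; your ``absorb the factor $3$'' remark is not quite innocent here. Second, your expectation estimate treats the infinite tail $\ell>k_{j+1}$ rather loosely (``dominated by its initial term''): since $c_\ell$ grows with $\ell$, this is not a pure geometric series and needs a line of justification, e.g.\ block the sum over $(k_{j+m},k_{j+m+1}]$ and use that the block contribution decays like $j^{-m\log N}$ while $c_{k_{j+m+1}}$ grows only polynomially in $m$. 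None of this affects the application to recurrence, where only the order $j^\alpha\log j$ matters.
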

\begin{proof} We take $b$ so that $0<b<\min(\alpha/\log N,2-1/\log N)$.

Let $I_j =(k_{2j},k_{2(j+1)}]\text{-annulus}$, then by Lemma \ref{L.8} $I_j$ is connected by edges
only to $I_{j-1}$ and  $I_{j+1}$ for large $j$.

Note that for $ 2(j+1)<\ell\leq 2(j+2)$,
\[ c_\ell \lesssim \kappa_j \text{   for large  } j,\]
where $\kappa_j$ is given by (\ref{E.3.5}).  For a vertex $x\in I_j$, let
\[\mathcal{M}_j(x)=\{\text{vertices in } I_{j+1}\text{  connected to }x\text{  by an edge}\}.\]
Then
\[
|\mathcal{M}_j(x)|=\sum_{\ell=k_{2(j+1)}+1}^{k_{2(j+2)}}\text{Bin}\left(N^\ell-N^{\ell-1},c_\ell/N^{2\ell}\right).\]
By ultrametricity, the distribution of $|\mathcal{M}_j(x)|$ is the same for any  $x\in I_j$. Then
by (\ref{E.3.3})
\begin{eqnarray*}
&&E|\mathcal{M}_j(x)|
=(1-\frac{1}{N})\sum_{\ell=k_{2(j+1)}+1}^{k_{2(j+2)}}\frac{c_\ell}{N^\ell}\\&& \sim \kappa_j
\left(\frac{1}{N^{k_{2(j+1)}+1}}-\frac{1}{N^{k_{2(j+2)}+1}}\right)\nonumber\\&&\sim
\frac{\kappa_j}{NN^{k_{2(j+1)}}}\text{   for large  }j.\nonumber\end{eqnarray*} Let
\[ \Pi_j=\{\text{edges connecting vertices in }I_{j}\text{ restricted to the cluster and vertices in }I_{j+1}\}.\]
Then the sets $\Pi_j$ are finite cutsets for the cluster and they are pairwise disjoint for large
$j$. Hence
\[ E|\Pi_j|\leq |I_j|E|\mathcal{M}_j|\lesssim N^{k_{2(j+1)}}\frac{\kappa_j}{NN^{k_{2(j+1)}}}\lesssim\frac{\kappa_j}{N}\text{   for large  }j.\]

\end{proof}

\subsection{Graph diameters and path lengths}\label{ss.3.2}

\noindent In this part we will obtain bounds for the lengths of paths in the percolation clusters joining two
points within distance $k_n$ from {\bf 0} for large $n$, for $\delta=1$ and $\delta <1$, which are of independent interest.
This will be done by means of known results on diameters of random graphs \cite{CL,RW}. However, for the proof of transience of
the random walk on the cluster in the case $\delta =1$ we will need a stronger result with a probability bound.

We assume that $n_0$ is large enough according to the proofs of Theorems 3.1(b) and 3.5(b) in
\cite{DGo2} (we will  refer to parts of those proofs). This means that the things we will do are
possible for $n\geq n_0$, in particular there exist the direct edge connections between clusters we
will refer to. If the two points are in the same $k_{n_0}$-cluster, the length of a path joining
them is bounded by the diameter of the cluster. Therefore we will assume
that the two points lie in the clusters of different $k_{n_0}$-balls. We proceed as follows:

\noindent $\bullet$ Find bounds for the diameters of the Erd\H{o}s-R\'enyi random graphs $G({\cal
N}_n,p_n)$ defined below, whose vertices are $k_n$-balls in a $k_{n+1}$-ball, and the connection
probability $p_n$ is defined in terms of direct edges  between the clusters of those
$k_n$-balls. These graphs are also ultrametric random graphs.

\noindent $\bullet$ Since the $k_n$-balls consist of $k_{n-1}$-balls,  find bounds for the length
of a path of $k_{n-1}$-balls in a $k_n$-ball connecting an incoming $k_{n-1}$-ball and an outgoing
$k_{n-1}$-ball (this may be called a $k_n${\em-level path}). Such a path may visit a $k_{n-1}$-ball
more than one time, but that does not matter because we consider shortest paths.

\noindent $\bullet$ Having done the previous two things, do the same  going from $n$ to $n-1$,
etc., down to $n_0$, where we end up with  the diameters of the clusters of $k_{n_0}$-balls which are i.i.d., and we denote their expected value
by $D(n_0)$.

In the arguments and calculations for path lengths we may think of paths
 within the ball $B_{k_n}(\mathbf{0})$
 joining {\bf 0} to a point in the $(k_{n-1},k_n]$-annulus. However, by ultrametricity
 any point in $B_{k_n}(\mathbf{0})$ is a center, so the bounds hold as well for paths joining any two points within distance $k_n$ from {\bf 0}.

We recall from \cite{DGo2} (Def. 4.1, Def. 5.5) that a $k_n$-ball is ``good'' if its cluster has
size at least $N^{\gamma k_n}$ for $\delta<1$, where $(1+\delta)/2<\gamma<1$, and if its cluster
has size at least $\beta N^{k_n}$ for $\delta=1$, with some  $0<\beta<1$. In the case $\delta=1$ we
assume $b>K>2/\log N$, which corresponds to $\alpha >2$. Under these conditions, in the proofs of
Theorems 3.1(b) and 3.5(b) it is shown that for all but finitely many $n$ the $k_n$-balls in any
increasing nested sequence  are good (see \cite{DGo2}, (4.22) for $\delta<1$, (5.11), (5.23) for
$\delta=1$).

Let ${\cal N}_n$ denote the number of good $k_n$-balls in a $k_{n+1}$-ball, and
\begin{equation}\label{E.3.6} p_n=P(\hbox{\rm the clusters in two good}\,\,k_n\hbox{\rm-balls in a}\,\,k_{n+1}\hbox{\rm-ball
are connected}).\end{equation}
 Note that $p_n$ is random because the sizes of
the clusters are random. We consider the Erd\H{o}s-R\'enyi graphs  $G({\cal N}_n,p_n)$. In all the
cases, ${\cal N}_n\to\infty$ as $n\to\infty$. In the proof of Theorem 5.3(b) in \cite{DGo2}  it is
shown that for $\delta=1$, $b\geq 1$, and  $\beta >$ 1/5, the graph $G({\cal N}_n,p_n)$ becomes
connected for large $n$. We shall see that for $b>2$ it even becomes complete (all pairs of
vertices are connected).

\subsubsection {Diameters of the  graphs $G(\mathcal{N}_n,p_n)$}

First we obtain bounds for the diameters of the graphs $G({\cal N}_n,p_n)$ in the following cases
where except in case 2 we assume that $K=1$.
\medskip

\noindent {{\em Case 1. $\delta<1$.}}

From \cite{DGo2} ((4.5), (4.8), (4.10)), we have the lower bound for (\ref{E.3.6})
$$
p_n\geq 1-{\rm exp}(-c N^{2\gamma k_n-(1+\delta)k_{n+1}})>1- {\rm exp} (-cN^{\varepsilon n\log
n})\,\,\,\,{\rm as}\,\, n\to\infty,$$ with some $0<\varepsilon<1$, hence $p_n\to 1$ and ${{\cal
N}_np_n}/{\log {\cal N}_n}\to \infty\,\,{\rm as}\,\,n\to\infty,$ therefore by Theorem 2 in
\cite{CL} diam$(G({\cal N}_n,p_n))$ is  concentrated on at most two values $\{1,2\}$ at
$$\frac{\log {\cal N}_n}{\log ({\cal N}_np_n)}\to 1\quad{\rm as}\quad
n\to\infty,$$ which implies that diam$(G({\cal N}_n,p_n))\leq 2$ for large $n$.

\medskip

\noindent {{\em Case 2. $\delta=1$, $b>2K$.}}

\begin{lemma}\label{L.10'} Assume that $b> 2K$.  Then the graph $G({\cal N}_n,p_n)$
is complete and  $\text{diam}(G({\cal N}_n,p_n))=1$ for large $n$.
\end{lemma}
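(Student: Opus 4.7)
The plan is to show that $1-p_n$ decays so fast that a union bound over all $\binom{\mathcal{N}_n}{2}$ pairs of good $k_n$-balls inside a fixed $k_{n+1}$-ball, followed by Borel--Cantelli, forces every such pair to be directly connected for all large $n$, which is a stronger statement than connectivity in the sense of path distance $2$ and immediately yields $\mathrm{diam}(G(\mathcal{N}_n,p_n))=1$.

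First I would fix two good $k_n$-balls $B'$ and $B''$ inside a common $k_{n+1}$-ball. By ultrametricity every pair $x\in B'$, $y\in B''$ lies at the same distance $\ell\in(k_n,k_{n+1}]$, and the worst case is $\ell=k_{n+1}$ since $c_\ell/N^{2\ell}$ is decreasing in $\ell$ in the regime of interest. Each cluster has at least $\beta N^{k_n}$ vertices, supplying at least $\beta^2 N^{2k_n}$ independent potential edges, so
$$1-p_n\le\Bigl(1-\tfrac{c_{k_{n+1}}}{N^{2k_{n+1}}}\Bigr)^{\beta^2 N^{2k_n}}\le\exp\!\Bigl(-\beta^2\,\tfrac{c_{k_{n+1}}}{N^{2(k_{n+1}-k_n)}}\Bigr).$$
From (\ref{E.3.2}) one has $c_{k_{n+1}}\sim a\log n\cdot n^{b\log N}$, and from (\ref{E.3.3}), $k_{n+1}-k_n\sim K\log n$ gives $N^{2(k_{n+1}-k_n)}\sim n^{2K\log N}$; the exponent therefore behaves like $\beta^2 a\log n\cdot n^{(b-2K)\log N}$, tending to infinity superpolynomially precisely because $b>2K$.

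Next I would apply the union bound. Since the fixed $k_{n+1}$-ball is a disjoint union of $N^{k_{n+1}-k_n}$ $k_n$-balls we have $\mathcal{N}_n\le N^{k_{n+1}-k_n}\sim n^{K\log N}$, so the number of pairs is at most of order $n^{2K\log N}$. Thus
$$P\bigl(G(\mathcal{N}_n,p_n)\text{ is not complete}\bigr)\lesssim n^{2K\log N}\exp\!\bigl(-\beta^2 a\log n\cdot n^{(b-2K)\log N}\bigr),$$
which is summable in $n$. Borel--Cantelli yields that $G(\mathcal{N}_n,p_n)$ is complete, and hence of diameter $1$, for all but finitely many $n$.

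The only subtle point is quantitative rather than conceptual: one must check that the superpolynomial decay of $1-p_n$ genuinely absorbs the polynomial pair count, and this is exactly where the hypothesis $b>2K$ is needed (a hypothesis weaker than it would be ominous, since the regime $b>K$ alone sufficed for mere connectivity in \cite{DGo2}). The only external input is the already-proved fact, from the proofs of Theorems~3.1(b) and~3.5(b) in \cite{DGo2}, that along any nested increasing sequence of balls all but finitely many $k_n$-balls are good, so that the restriction to good $k_n$-balls in the definition of $p_n$ is non-vacuous and the above count is legitimate.
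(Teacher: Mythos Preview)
Your proof is correct and follows essentially the same route as the paper: bound $1-p_n$ by $\exp(-\beta^2 a\log n\cdot n^{(b-2K)\log N})$ (the paper quotes this from \cite{DGo2}, Lemma~5.7, while you rederive it), then apply a union bound over the at most $N^{2K\log n}=n^{2K\log N}$ pairs and invoke Borel--Cantelli. Your presentation is in fact slightly more self-contained, since you spell out the worst-case distance $\ell=k_{n+1}$ and the asymptotics rather than citing the earlier paper.
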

\begin{proof}From \cite{DGo2} (proof of Lemma 5.7 except the last step), we have
$$p_n\geq 1-{\rm exp}(-{\beta^2a\log n\cdot N^{(b-2K)\log n}})\to 1 \text{   as }n\to\infty.$$
Then, since ${\cal N}_n\leq N^{K\log n}$,

$q_n:=P$(some pair of clusters of two good $k_n$-balls in a $k_{n+1}$-ball is not connected)

$\qquad\qquad\leq N^{2K\log n}(1-p_n)$ $\leq N^{2K\log n}{\rm exp}(-\beta^2a\log n\cdot
N^{(b-2K)\log n})$

$\qquad\qquad = n^{2K\log N}/n^{\beta^2an^{(b-2K)\log N}}$,

\noindent Therefore $\sum q_n<\infty$, hence by Borel-Cantelli for all but finitely many $n$ the
graph $G({\cal N}_n,p_n)$ is complete. So, $\text{diam}(G({\cal N}_n,p_n))=1$ for large $n$.
\end{proof}
\medskip




\noindent {{\em Case 3. { $\delta=1, b= 2$.}}}

As in case 2,
$$p_n\geq 1-{\rm exp}(-\beta^2a\log n)\to 1\,\,\,\,{\rm as}\,\,n\to\infty,$$
then as in case 1, diam$(G({\cal N}_n,p_n))$ $\leq 2$ for large $n$.
\bigskip

\noindent{{\em Case 4. {$\delta=1, 1<b<2$.}}}

 Again as above,
$$
p_n\geq 1-{\rm exp} \left(- \frac{\beta^2a\log n}{N^{(2-b)\log n}}\right)
>\frac{\beta^2a\log n}{2N^{(2-b)\log n}}
$$
for large $n$  such that
$$\frac{\beta^2a\log n}{2N^{(2-b)\log n}} < 0.7968.$$
Since ${\cal N}_n\leq N^{\log n}$,
 and from the proof of Theorem 3.5(b) in \cite{DGo2} ((5.17), (5.18) and step 1) we have
${\cal N}_n\geq \beta N^{\log n}$ for large $n$, then
$$
\frac{{\cal N}_np_n}{\log {\cal N}_n}\geq \frac{\beta^3a\log n\cdot N^{b\log n}} {2N^{\log n}\log
n\cdot \log N}= \frac{\beta^3a}{2\log N}N^{(b-1)\log n}\to\infty\,\,\,\,{\rm as}\,\,n\to\infty,
$$
then, again by Theorem 2 in \cite{CL}, diam$(G({\cal N}_n,p_n))$ is  concentrated on at most two
values at
\begin{eqnarray*}
\lefteqn{\frac{\log {\cal N}_n}{\log ({\cal N}_np_n)}\leq \frac{\log n\cdot \log N}
{\log\left(\displaystyle
\frac{\beta^3a}{2}\log n \cdot N^{(b-1)\log n}\right)}}\\
&\sim& \frac{\log n\cdot \log N}{\log(\log n\cdot N^{(b-1)\log n})}\lesssim\frac{1}{b-1}\,\,\,{\rm
as}\,\,\, n\to\infty,
\end{eqnarray*}
so, diam$(G({\cal N}_n,p_n))\leq {b}/({b-1})$ for large $n$. Note that this bound is continuous at
$b=2$ (case 3).
\medskip

\noindent{{\em Case 5. $\delta=1, b=1.$}}

By Theorem 1.2 of \cite{RW} and \cite{DGo2} (Lemma 5.7),
$$
 p_n\gtrsim \frac{\beta^2a\log n}{N^{\log n}}=\frac{\lambda_n}{{\cal N}_n}
=:\widetilde{p}_n\quad\hbox{\rm for large}\quad n,$$ where
$$
\lambda_n=\beta^2a\log n\cdot \frac{{\cal N}_n}{N^{\log n}}\leq \beta^2a\log n\leq {\cal
N}^{1/1000}_n\quad\hbox{\rm for large}\quad n,
$$
hence diam$(G({\cal N}_n,\widetilde{p}_n))$ is concentrated on two values around
$$
f({\cal N}_n,\lambda_n)=\frac{\log {\cal N}_n} {\log \lambda_n}+\frac{2\log{\cal N}_n}{\log
(1/\lambda^*_n)}+O(1) \,\,\,\,\hbox{\rm for large}\,\,n,$$ where
\begin{eqnarray*}
\lambda^*_ne^{-\lambda^*_n}&=&\lambda_ne^{-\lambda_n},\,\,\,\lambda_n\to\infty,\,\,\,\lambda^*_n\to
0,\end{eqnarray*} then
\begin{eqnarray*}
\log \lambda_n&\sim& \log\log n,\\
\log \lambda^*_n-\lambda^*_n&=&\log\lambda_n-\lambda_n,
\end{eqnarray*}
since ${\cal N}_n\geq \beta N^{\log n}$ for large $n$, then
\begin{eqnarray*}
\log (1/\lambda^*_n)&=&\lambda_n-\log\lambda_n-\lambda^*_n\gtrsim \beta^3a\log n,
\end{eqnarray*}
and
\begin{eqnarray*}
f({\cal N}_m,\lambda_n)&\lesssim&\frac{\log{\cal N}_n}{\log\log n}+ C_1\frac{\log{\cal N}_n}{\log
n}+O(1) \lesssim C_2\log n\quad \hbox{\rm for large}\quad n,
\end{eqnarray*}
so, diam$(G({\cal N}_n,{p}_n))$ $\lesssim L \log n$ for some constant $L>0$ and large $n$.

\subsubsection{Path lengths in $\mathcal{G}_N^\infty$}

    We will now obtain a bound $L(n)$ for the expected length of a path
joining two points in a $k_n$-ball for large $n$ (recall that this is the
same as the expected length of a path from $\mathbf{0}$  to a point in the
$(k_{n-1}, k_n]$-annulus). This follows the three steps in the procedure
described at the beginning of subsection 3.2, once we have results on the
expected values of the diameters on the graphs $G(\mathcal{N}_n, p_n)$. We use here expected values of the diameters of the graphs for a
general argument, but we actually found bounds for the diameters
themselves for large $n$ in the five special cases considered above. Note
however that we do the calculation by an iteration in reverse order, that
is, starting with $L(n_0) = D(n_0)$ and ending with $L(n_0 + j)$.

Let $n_0$ be large enough as mentioned above. We have denoted by
$D(n_0)$ the expected diameter of the cluster in a $k_{n_0}$-ball. Let $D(n_0+j)=E[{\rm diam}(G(\mathcal{N}_{n_0+j},p_{n_0+j}))]$, $j\geq 1$,
$L(n_0+j)$ denote  a bound for the expected length of a path joining  $\bf 0$ to a point in  the
$(k_{n_0+j-1},k_{n_0+j}]$-annulus, $j\geq 1$,
$L(n_0)=D(n_0)$.

 Then $L(n_0+1)=D(n_0)(D(n_0+1)+1)+D(n_0+1)$, because there are at most $D(n_0+1)$ edges
in the $k_{n_0+1}$-ball that join  $D(n_0+1)+1$ $k_{n_0}$-balls,
considering the two  ends of the $k_{n_0+1}$-level path (a path of $k_{n_0}$-clusters), and that
the path may enter and leave  each $k_{n_0}$-cluster from different points that are joined by a
path of length at most $D(n_0)$ in the $k_{n_0}$-cluster. So,
$$L(n_0+1)=D(n_0)D(n_0+1)+D(n_0)+D(n_0+1).$$

\noindent Similarly,
\begin{eqnarray*}
L(n_0+2)&=&L(n_0+1)(D(n_0+2)+1)+D(n_0+2)\\
&=&D(n_0)D(n_0+1)D(n_0+2)+D(n_0)D(n_0+2)+D(n_0+1)D(n_0+2)\\
&&+D(n_0)D(n_0+1)+D(n_0)+D(n_0+1)+D(n_0+2)\\
&=&(D(n_0)+1)(D(n_0+1)+1)(D(n_0+2)+1)-1,
\end{eqnarray*}
We show that
\begin{eqnarray*}
L(n_0+k)&=&\prod^k_{j=0}(D(n_0+j)+1)-1\quad{\rm for}\quad k\geq 0\end{eqnarray*} by induction:
\begin{eqnarray*}
L(n_0+k+1)&=&L(n_0+k)(D(n_0+k+1)+1)+D(n_0+k+1)\\
&=&\left(\prod^k_{j=0}(D(n_0+j)+1)-1\right)(D(n_0+k+1)+1)+D(n_0+k+1)\\
&=&\prod^{k+1}_{j=0}(D(n_0+j)+1)-1.
\end{eqnarray*}
Hence \begin{equation*} L(n_0+j)\leq \displaystyle\prod\limits^j_{k=0}(D(n_0+k)+1),\quad j\geq 1.
\end{equation*} It follows that if  $D(n_0+k)<L$ for $k>1$, with $L>1$ (cases $1,3$ and $4$ above),
then
\begin{equation}\label{E.3.7} L(n_0+j)<(D(n_0)+1)(L+1)^j\,\,\,\,{\rm for}\,\, j\geq 1.\end{equation}
 \noindent and if $D(n_0+k)<
L_1 \log k$ for $k>1$, with $L_1>0$ (case $5$ above) then \begin{equation*}
L(n_0+j)<(D(n_0)+1)(L\log j)^j\,\,{\rm for}\,\,\,\,j\geq 1\,\,\hbox{\rm and some constant
}\,\,L>0.\end{equation*}

The results that will be used for the proofs below for the cases $\delta<1$, and $\delta=1,b>2$
are:

\begin{lemma}\label{L.12} In the case $\delta<1$,
\begin{equation}\label{E.3.8}
L(n_0+j)\leq K(n_0)(L+1)^j \text{   for  }j\geq 1,
\end{equation}
where $K(n_0)=D(n_0)+1$.

\end{lemma}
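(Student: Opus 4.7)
The plan is to assemble two ingredients that have already been prepared above the lemma statement. The first is the inductive identity, proved just prior to the lemma,
$$L(n_0+j) \leq \prod_{k=0}^{j}\bigl(D(n_0+k)+1\bigr), \qquad j\geq 1,$$
which expresses the expected path length in a $k_{n_0+j}$-ball as a product of the expected diameters at each renormalization level, plus the diameter within a good $k_{n_0}$-cluster. The second ingredient is the analysis of Case 1 ($\delta<1$) in Section~3.2.1: from the bound $p_n\geq 1-\exp(-cN^{\varepsilon n\log n})$ and ${\cal N}_n p_n/\log {\cal N}_n\to\infty$, Theorem~2 of \cite{CL} concentrates $\text{diam}(G({\cal N}_n,p_n))$ on $\{1,2\}$ for all sufficiently large $n$.

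Using these, the proof is a one-line computation. First I would fix $L>1$ (say $L=2$) large enough that $D(n_0+k)\leq L$ for every $k\geq 1$; by enlarging $n_0$ if necessary this can be arranged via Case~1. Then, separating the $k=0$ factor from the rest,
$$L(n_0+j)\;\leq\;\bigl(D(n_0)+1\bigr)\prod_{k=1}^{j}\bigl(D(n_0+k)+1\bigr)\;\leq\;(D(n_0)+1)(L+1)^{j}\;=\;K(n_0)(L+1)^{j},$$
which is exactly the stated bound.

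The only step requiring any care is promoting the Chung--Lu concentration statement (a high-probability bound on diam) into a bound on the \emph{expected} diameter $D(n_0+k)$. Since $\text{diam}(G({\cal N}_n,p_n))\leq {\cal N}_n$ trivially and $1-p_n$ decays faster than any polynomial in ${\cal N}_n$ in Case~1, the tail contribution
$$E[\text{diam}\cdot\mathbf{1}_{\text{diam}>2}]\;\leq\;{\cal N}_n\,P(\text{diam}>2)$$
is $o(1)$, so $D(n_0+k)\leq 2 + o(1)$ and in particular stays below any $L>2$ for large $k$. That is the main (very mild) obstacle; once it is dispatched, the lemma follows immediately from the recursion and the bound $L=2$. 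Cases~3 and 4 of Section~3.2.1 would yield analogous estimates with different constants $L$, but for the present lemma only Case~1 is relevant.
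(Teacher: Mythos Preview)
Your proposal is correct and follows essentially the same route as the paper: the paper's proof is literally the one-liner ``This follows from Case~1 and (\ref{E.3.7}),'' which is exactly the product bound plus the Case~1 diameter estimate you invoke. Your additional paragraph about upgrading the Chung--Lu concentration statement to a bound on the expected diameter is a point the paper glosses over, so including it is a reasonable clarification rather than a departure in method.
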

\begin{proof} This follows from case 1 and (\ref{E.3.7}).
\end{proof}

This result could be made stronger by an argument of the type in the next Lemma, but it suffices as it is to prove transience of the random walk on the percolation cluster for $\delta <1$.

\begin{lemma}\label{L.13}  In the case $\delta=1, b>2K>2$,
\begin{equation}\label{E.3.9} L(n_0+j)\leq C D(n_0)j\quad\text{ for} \quad j\geq 1\quad\text{ and some constant}\quad
C>0.\end{equation}

\end{lemma}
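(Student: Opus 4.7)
The plan is to strengthen the naive recursion $L(n_0+j)\leq 2L(n_0+j-1)+1$, which follows from the general formula for $L$ together with $D(n_0+k)=1$ from case 2 (Lemma~\ref{L.10'}), and which only yields an exponential-in-$j$ bound via (\ref{E.3.7}). The sharpening rests on the fact that the expected number of direct edges between two good $k_n$-clusters in a $k_{n+1}$-ball is not merely at least one (as needed for completeness of $G(\mathcal{N}_n,p_n)$) but of order $N^{(b-2K)\log n}$, which grows rapidly in $n$. This abundance of edges is what permits a one-sided extension argument in place of the symmetric two-sided recursion.

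The goal is to establish the telescoping inequality
\[
L(n_0+j)\leq L(n_0+j-1)+C_1 D(n_0),\qquad j\geq 1,
\]
which together with $L(n_0)=D(n_0)$ gives the linear bound $L(n_0+j)\leq CD(n_0)j$ by induction. The idea is: given an almost optimal path from $\mathbf{0}$ to a vertex $\mathbf{y}$ in $\mathbf{0}$'s $k_{n_0+j-1}$-cluster of length at most $L(n_0+j-1)$, extend it to reach the target $\mathbf{x}$ at distance $k_{n_0+j}$ by adding a short tail consisting of one direct edge at level $n_0+j$ followed by a traversal of at most $D(n_0)$ steps inside $\mathbf{x}$'s $k_{n_0}$-cluster. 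The flexibility comes from choosing the exit vertex $\mathbf{y}$ of the pre-existing path so that it admits such a favourable direct edge.

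The main technical step is to coordinate the exit vertex $\mathbf{y}$ with a direct edge at level $n_0+j$ whose other endpoint lies in the $k_{n_0}$-cluster of $\mathbf{x}$. The number of such well-positioned edges, counted over candidate exit vertices in $\mathbf{0}$'s $k_{n_0+j-1}$-cluster, has expectation bounded below by the abundance estimate $N^{(b-2K)\log(n_0+j)}$ combined with the size factor from the endpoints, and a second-moment / Borel--Cantelli argument of the type used in the proof of Lemma~\ref{L.10'} shows that with probability tending to one a well-positioned edge exists at every level $n_0+1,\dots,n_0+j$ simultaneously.

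The main obstacle is precisely this coordination step: for the scheme to give $O(D(n_0))$ per level, one needs the direct edge's $\mathbf{x}$-side endpoint to land in the $k_{n_0}$-cluster of $\mathbf{x}$, which is a stringent constraint. The abundance of edges, of order $N^{(b-2K)\log n}$, together with the assumption $b>2K>2$ (which ensures $K>1$ and hence that the annulus thickness $k_{n+1}-k_n\sim K\log n$ grows sufficiently to keep the expected endpoint counts useful at every scale), is what makes this constraint satisfiable; iterating the one-step inequality then yields the stated linear bound.
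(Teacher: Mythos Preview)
Your one-step extension scheme does not work, and the difficulty is exactly the ``main obstacle'' you flag but then dismiss too quickly. You want a single edge of hierarchical length $k_{n_0+j}$ whose far endpoint lands in the $k_{n_0}$-cluster of the target $\mathbf{x}$. Count such edges: the source set (the cluster of $B_{k_{n_0+j-1}}(\mathbf{0})$) has size $\asymp \varepsilon N^{k_{n_0+j-1}}$, the target set has size $\asymp \varepsilon N^{k_{n_0}}$, and the edge probability is $\asymp a\log(n_0+j)\,N^{b\log(n_0+j)}/N^{2k_{n_0+j}}$. The expected number of well-positioned edges is therefore of order
\[
N^{\,k_{n_0+j-1}+k_{n_0}+b\log(n_0+j)-2k_{n_0+j}}
= N^{\,k_{n_0}-k_{n_0+j}+(b-K)\log(n_0+j)+o(\log(n_0+j))},
\]
and since $k_{n_0+j}\sim K(n_0+j)\log(n_0+j)$ grows much faster than $(b-K)\log(n_0+j)$, this expectation tends to $0$ as $j\to\infty$. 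No second-moment or Borel--Cantelli argument can rescue this: the edges you need simply do not exist for large $j$. The ``abundance'' $N^{(b-2K)\log n}$ you invoke is the expected number of edges between two $k_n$-clusters inside a $k_{n+1}$-ball; it does not apply when one endpoint is constrained to a fixed $k_{n_0}$-ball, which is smaller by a factor $N^{k_n-k_{n_0}}$.

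The paper's argument is genuinely different: it descends through \emph{all} intermediate scales rather than jumping in one step. At scale $m$ (from $n_0+j$ down to $n_0+1$) one bounds the expected number $e_m$ of level-$k_m$ edges needed to cross between the relevant $k_{m-1}$-balls; because $b>2K$ the probability that two such balls are \emph{not} directly connected is $O(\exp(-c\,m^{(b-2K)\log N}))$, so $e_m=1+$(summable error). Hence $\prod_{\ell=1}^\infty e_{n_0+\ell}<\infty$, and the total expected path length is bounded by $\sum_{m=1}^j\bigl(\prod_{\ell\leq m}e_{n_0+\ell}\bigr)D(n_0)\leq CD(n_0)\,j$. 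The linear bound comes from summing $j$ contributions each of order $D(n_0)$, not from a telescoping recursion of the form $L(n_0+j)\leq L(n_0+j-1)+O(D(n_0))$.
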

\begin{proof} Here we also refer to the setup of Section \ref{S.perc}.
To obtain an upper bound on the expected path length between two points  $\mathbf{x},\mathbf{y}$ in a ball of diameter $k_{n_0+j}$ we find a path with edges with hierarchical
lengths $k_{n_0+j},k_{n_0+j-1},\dots, k_{n_0+1}$ corresponding to connections between disjoint $k_{n_0+j-1}$-balls, $k_{n_0+j-2}$-balls,$\dots$, $k_{n_0+1}$-balls respectively, and paths within $k_{n_0}$-balls.  We first calculate an upper bound on the expected number of edges of length $k_{n_0+j}$ needed. If the points are in the same $k_{n_0+j-1}$-ball this is $0$,
 if they are in connected $k_{n_0+j-1}$-balls this is $1$ and otherwise the length of the path is bounded by $N^{K\log n}$. The probability that two $k_{n_0+j-1}$-balls  having densities at least $\varepsilon$ are connected is given by
\begin{eqnarray*}{}
&&P\left(\text{two }k_n\text{-balls with densities }\geq\varepsilon \text{ in  } B_{k_{n+1}}(\mathbf{0})\text{  are connected}\right)\\
&&\gtrsim 1-\left( 1-\frac{a\log n\cdot N^{b\log n}}{N^{2k_{n+1}}}\right)^{\varepsilon^2N^{2k_n}}\\&&
\sim 1-\exp\left(-a\varepsilon^2\log n\cdot N^{b\log n-2(k_{n+1}-k_n)}\right)\\&&
\sim 1-\exp\left(-a\varepsilon^2\log n\cdot n^{(b-2K)\log N}\right).
\end{eqnarray*}
Therefore for $\alpha >1$, recalling (\ref{E.2.42}), \begin{eqnarray}{}
&&s_n:= P\left(\text{two }k_n\text{-balls in  } B_{k_{n+1}}(\mathbf{0})\text{  are connected}\right)\nonumber\\
&&\qquad\qquad\gtrsim 1-\exp\left(-a\varepsilon^2\log n\cdot n^{(b-2K)\log N}\right)-2 z_{k_n}(\varepsilon)\nonumber\\&&\qquad\qquad
\sim 1-\exp\left(-a\varepsilon^2\log n\cdot n^{(b-2K)\log N}\right)- c \zeta^{Kn\log n}.\nonumber
\end{eqnarray}
for some constant $c>0$ and $0<\zeta<1$ (using  (\ref{E.2.52}) with $k_n$ in the place of $n$).
Therefore the expected number of edges of length $k_{n_0+j}$ is bounded by
\begin{eqnarray}
  e_{n_0+j}= 1+N^{K\log (n_0+j)} [\exp\left( -a\varepsilon^2\log (n_0+j) \cdot (n_0+j)^{(b-2K)\log N}\right)+c \zeta^{K(n_0+j)\log (n_0+j)}].\nonumber
\end{eqnarray}
We must then connect the entrance vertices and the exit vertices in each of the $k_{n_0+j-1}$-balls and following the same procedure  the bound for the expected number of length $k_{n_0+j-1}$ edges needed in a given $k_{n_0+j-1}$ -ball is given by $e_{n_0+j-1}$, and since the random variables involved are independent we obtain that the expectation of the total number of length $k_{n_0+j-1}$ edges needed is bounded by $e_{n_0+j}\cdot e_{n_0+j-1}$. Continuing we obtain that the upper bound for the expected number of edges of length $k_{n_0+1}$ is
\[ \prod_{\ell= 1}^j e_{n_0+\ell}.\] Noting that since $b-2K>0$ and $0<\zeta<1$,
\[ \lim_{j\to\infty}\prod_{\ell= 1}^j e_{n_0+\ell}=\prod_{\ell=1}^\infty \left(1+N^{K\log (n_0+\ell)} \left[\exp\left( -a\varepsilon^2\log(n_0+1) \cdot (n_0+\ell)^{(b-2K)\log N}\right)+c\zeta^{K(n_0+\ell)\log (n_0+\ell)}\right]\right)<\infty,\]
and that the expected path length between points $\mathbf{x}$ and $\mathbf{y}$ is bounded by
\[ \sum_{m=1}^j \prod_{\ell =1}^{m}e_{n_0+\ell}D(n_0)\leq \left(\prod_{\ell=1}^\infty e_{n_0+\ell}\right)D(n_0)\,j\]
where $D(n_0)$ is the expected path length between points in a $k_{n_0}$-ball, the proof is finished.

\end{proof}
\medskip

\medskip
\section{Random walks on the percolation cluster}

\subsection{Random walks and electric circuits}
In this subsection we review briefly some basic background on random walks and electric circuits on
graphs which will then be applied to random walks on the percolation clusters.

The nearest neighbour random walk on a finite or an infinite graph such as the percolation cluster
is a Markov chain on the countable connected subset given by the graph.
 Here  there is a transition between neighbours $x$ and $y$  with probability
\[ p_{xy}=\frac{1}{n(x)},\]
where $n(x)$ is the number of neighbours of $x$ in the graph.

The random walk is a {\em reversible} since setting $\pi(x)=n(x)$ we have
\begin{equation*} C(x,y)= \pi(x)p_{xy}=\pi(y)p_{yx} \; \text{for all }\; x,y.\end{equation*}
 In this case $C(x,y)$ is called the {\em conductance} between $x$ and $y$ and the {\em resistance} $R(x,y)$ is
 defined as
$R(x,y)={1}/{C(x,y)}$.

For  any finite set $Z$ of vertices the {\em effective conductance} and {\em effective resistance}
between a point $a$ and $Z$ are defined as \begin{equation*}\label{ECR}
\mathcal{C}(a\leftrightarrow Z)= {\pi(a)} P(\tau_Z<\tau^+_a),\quad \mathcal{R}(a\leftrightarrow
Z)=1/\mathcal{C}(a\leftrightarrow Z).\end{equation*} where $\tau^+_a$ is the first time after $0$
that walk visits $a$ and $\tau_Z$ is the hitting time of $Z$.


If $G$ is an infinite connected graph, let $G_n$ be a finite subgraph of $G$ such that $G_n\uparrow
G$ as $n\to\infty$ and $Z_n:= G\backslash G_n$ (identified as a single vertex). Then the {\em
effective resistance from $a$ to $\infty$} is defined as
\[ \mathcal{R}(a\leftrightarrow\infty)=\lim_{n\to\infty}\frac{1}{ \mathcal{C}(a\leftrightarrow Z_n)}.\]

\subsection{Criteria for transience and recurrence}\label{SS.TR}

 Doyle and Snell \cite{DS} (also see \cite{LP,KL})  proved that the effective resistance is equivalent to
 the resistance computed using the laws of electric circuit theory applied to the circuit obtained
 by replacing each edge by a unit resistor resulting in the following criterion for
 transience and recurrence.
 \medskip

 \noindent \textbf{Criterion for
transience-recurrence}\label{crit-trans-rec} The random walk on an infinite connected graph is
transient, respectively recurrent, if $\mathcal{R}(a\leftrightarrow \infty)$ is finite,
respectively infinite.
\medskip

\noindent\textbf{Rayleigh monotonicity principle} Removing an edge increases the resistance between
two points. Therefore to prove that the random walk on the graph is transient it suffices to show
that it is transient on a subgraph.
\bigskip

We will use  a related criterion for transience  based on the Dirichlet's minimization principle
for energy of a flow in a circuit.

\begin{definition}
A {\em unit flow} on an infinite  graph ${G}=(V,E)$ with source $a\in V$  is a function $\theta$ on
the set of edges $E$ such that $ \theta(x,y)=-\theta(y,x)$ and  for all $x\ne a$,
\[ \sum_{x\ne a} \theta(a,x)=1\qquad \text{and   }\sum_{y\sim x}\theta(x,y)=0\;\text{for all }\; x\ne a,\]
where $x\sim y$ means that $y$ is a neighbour of $x$.
\end{definition}


\begin{definition}\label{D.energy} The  {\em energy} of the flow is
\[ \mathcal{E}(\theta)= \sum_{e\in E^*}(\theta(e))^2R(e),\]
where $E^*$ is the set of directed edges and  $R(e):= R(x,y)$ is the resistance of the  edge $e$
from $x$ to $y$.
\end{definition}



\subsubsection{Transience, finite energy criterion}\label{crit-trans}

A random walk on a countable connected graph $G$ is transient iff there is a unit flow
 from any vertex $a$  to $\infty$ on $G$  with finite energy  \cite{L}.

\subsubsection{Recurrence, Nash-Williams criterion }\label{crit-rec}

 If $\{\Pi_n\}$ is a sequence of disjoint finite cutsets
in a locally finite graph $G$, each of which separates $a$ from infinity, then
\[ \mathcal{R}(a\longleftrightarrow \infty)\geq \sum_n\left(\sum_{e\in\Pi_n}C(e)\right)^{-1}.\]
In particular, if the right-hand side is infinite, then the walk on $G$ is recurrent \cite{NW}. In
our case the edges have unit resistance, so the random walk is recurrent if
\[ \sum_n 1/|\Pi_n|=\infty.\]

\subsection{Transience and recurrence of random walks on the percolation cluster}

In this subsection we give transience and recurrence results for simple (nearest neighbour) random
walks on the percolation clusters for $\delta<1$ and for $\delta =1,\; C_2>0$.

\subsubsection{Recurrence for $\delta=1,\;\alpha\leq 1$ }

\begin{theorem}\label{T.rec}  In the case $\delta=1$, sufficiently large $ C_2$, for almost every realization of the percolation cluster the random walk
on the  cluster  is recurrent if $\alpha \leq 1$.
\end{theorem}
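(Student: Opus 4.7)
The plan is to apply the Nash--Williams criterion (Section 4.2.2) using the disjoint finite cutsets $\{\Pi_j\}$ from Lemma~\ref{L.9}. Since the cluster has unit edge conductances, recurrence will follow as soon as $\sum_j 1/|\Pi_j| = \infty$ almost surely on the percolation event. Lemma~\ref{L.9} gives $E|\Pi_j| \leq \kappa_j/N$ with $\kappa_j = a\,2^\alpha \log j \cdot j^\alpha$, and for $\alpha \leq 1$ the series $\sum_j N/\kappa_j$ diverges, so the expectation bound is already of the right order of magnitude. What remains is to upgrade it to an almost-sure statement.

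The key device is to enlarge $\Pi_j$ to its natural envelope $\tilde\Pi_j$, defined as the set of \emph{all} edges of the underlying random graph $\mathcal{G}^\infty_N$ (ignoring the cluster restriction) with one endpoint in $I_j$ and the other in $I_{j+1}$. Since $|\Pi_j| \leq |\tilde\Pi_j|$, we have $1/|\Pi_j| \geq 1/|\tilde\Pi_j|$, and it suffices to show $\sum_j 1/|\tilde\Pi_j| = \infty$ a.s. By ultrametricity, every edge counted in $\tilde\Pi_j$ has hierarchical length in $(k_{2(j+1)}, k_{2(j+2)}]$, and these length ranges are pairwise disjoint across $j$; consequently $\{|\tilde\Pi_j|\}_{j\geq 1}$ are independent random variables. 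Moreover, the computation in the proof of Lemma~\ref{L.9} uses only $|I_j|$ and $E|\mathcal{M}_j(x)|$ and makes no use of cluster membership, so it yields the same bound $E|\tilde\Pi_j| \leq \kappa_j/N$.

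Set $Y_j = 1/(|\tilde\Pi_j| + 1) \in [0,1]$. By Jensen's inequality, $E[Y_j] \geq 1/(E|\tilde\Pi_j| + 1) \geq 1/(\kappa_j/N + 1)$, so $\sum_j E[Y_j] = \infty$ for $\alpha \leq 1$. Since the $Y_j$ are independent and take values in $[0,1]$, $\mathrm{Var}(Y_j) \leq E[Y_j]$, so the partial sums $S_J = \sum_{j\leq J} Y_j$ satisfy $\mathrm{Var}(S_J) \leq E[S_J] \to \infty$. Choosing a subsequence $J_k$ with $E[S_{J_k}] \geq 2^k$, Chebyshev gives $P(S_{J_k} < 2^{k-1}) \leq 4/2^k$, and Borel--Cantelli together with monotonicity of $S_J$ yields $S_J \to \infty$ a.s. Hence $\sum_j 1/|\Pi_j| \geq \sum_j Y_j = \infty$ a.s., and Nash--Williams gives $\mathcal{R}(\mathbf{0} \leftrightarrow \infty) = \infty$, proving recurrence.

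The main obstacle is precisely this passage from the expectation bound of Lemma~\ref{L.9} to an almost-sure divergence, which is complicated by the fact that the cluster restriction couples the $\Pi_j$ in a non-transparent way. Enlarging to $\tilde\Pi_j$ costs nothing in the bound on expectations but decouples the sequence, which is what makes the standard second-moment argument go through. One also needs to choose the parameter $b$ in (\ref{E.3.2}) so that both $b < \alpha/\log N$ (for the cutset bound of Lemma~\ref{L.9}) and $b < 2 - 1/\log N$ (for the no-skipping Lemma~\ref{L.8} underlying that $\Pi_j$ is indeed a cutset) hold, which is always possible for $\alpha > 0$.
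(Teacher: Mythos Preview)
Your proof is correct and follows the same strategy as the paper: Nash--Williams with the cutsets $\Pi_j$ of Lemma~\ref{L.9}, Jensen's inequality to convert the expectation bound into $\sum_j E[1/|\Pi_j|]=\infty$, and an independence argument to upgrade this to almost-sure divergence. The paper simply asserts that the $1/|\Pi_j|$ are independent and then invokes \cite{Kal}, Prop.~4.14 together with Kolmogorov's $0$--$1$ law; your passage to the unrestricted counts $|\tilde\Pi_j|$ (which depend on pairwise disjoint families of edge variables, hence are genuinely independent) is a more careful justification of that same step, since the cluster restriction in the definition of $\Pi_j$ does introduce coupling that the paper glosses over.
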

\begin{proof} For the cutsets $\Pi_j$ in Lemma \ref{L.9} (note that for $\alpha\leq 1$ and $N\geq 2$,
 $\alpha/\log N\leq 2-1/\log N$, see choice of $b$ at the beginning of the proof of Lemma \ref{E.3.2}).
\[ E\left(\frac{1}{|\Pi_j|}\right)\geq \frac{1}{E|\Pi_j|},\]
(by Jensen's inequality since $1/x$ is convex on $(0,\infty)$), hence by (\ref{E.3.4})
\[E\left(\frac{1}{|\Pi_j|}\right)\gtrsim \frac{N}{\kappa_j}\text{  for large }j.\]
Then by (\ref{E.3.5})
\[ E\left(\sum_j\frac{1}{|\Pi_j|}\right)\geq
N\sum_j\frac{1}{\kappa_j}=\infty,\] since $\alpha \leq 1$.

 The random variables $1/|\Pi_j|$ are
independent and bounded by $1$, hence the probability that $\sum_j1/|\Pi_j|$ diverges is positive
(\cite{Kal}, Prop. 4.14), then by Kolmogorov's $0$-$1$ law $\sum_j1/|\Pi_j|$ diverges w.p.1. Then
the recurrence of the random walk follows by the
 Nash-Williams criterion.
 \end{proof}
\bigskip

\subsubsection{Transience  for $\delta<1$}

\begin{theorem}\label{T.trans1} In the case $\delta <1$, for almost every realization of the percolation cluster  the random
walk on the cluster is transient.

\end{theorem}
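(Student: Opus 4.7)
My approach is to verify the finite-energy criterion of Subsection~\ref{crit-trans}: it suffices to construct a unit flow $\theta$ from $\mathbf{0}$ to infinity on the percolation cluster with finite energy. By the Rayleigh monotonicity principle I may restrict to a suitable subgraph, which I take to be a hierarchical ``skeleton'' built from the nested balls $B_{k_{n_0+j}}(\mathbf{0})$, $j\geq 0$, following the renormalization scheme of Section~3 with $k_n=\lfloor Kn\log n\rfloor$. The key inputs from previous sections are the diameter bound $L(n_0+j)\leq K(n_0)(L+1)^{j}$ of Lemma~\ref{L.12}, the fact (Case~1 of Subsection~3.2.1) that $\mathrm{diam}(G(\mathcal{N}_n,p_n))\leq 2$ for large $n$, and the good-ball estimates from \cite{DGo2} guaranteeing that a positive fraction of the $k_{n_0+j-1}$-balls in each $B_{k_{n_0+j}}(\mathbf{0})$ are good.

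I would define $\theta$ recursively over levels. Starting from $\mathbf{0}$, I first route the unit flow to the outer boundary of the cluster of $B_{k_{n_0}}(\mathbf{0})$ along a single path of length at most $D(n_0)$. At each subsequent level $j\geq 1$, the flow arriving at a good $k_{n_0+j-1}$-cluster is split approximately uniformly among the direct-edge connections to the other good $k_{n_0+j-1}$-clusters inside the containing $B_{k_{n_0+j}}(\mathbf{0})$, and it is transported through each cluster via shortest paths of length at most $L(n_0+j)$. Since the macroscopic graph $G(\mathcal{N}_{n_0+j-1},p_{n_0+j-1})$ has diameter $\leq 2$, essentially every pair of good sibling clusters is reachable in one or two hops, so the effective branching factor at level $j$ is at least $\mathcal{M}_{j} \gtrsim N^{k_{n_0+j}-k_{n_0+j-1}}\sim (n_0+j-1)^{K\log N}$.

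The resulting energy bound has the form
\begin{equation*}
\mathcal{E}(\theta)\;\lesssim\; D(n_0)\,+\,\sum_{j\geq 1}\frac{L(n_0+j)}{\prod_{i=1}^{j-1}\mathcal{M}_{i}}\;\lesssim\; D(n_0)\,+\,\sum_{j\geq 1}\frac{(L+1)^{j}}{\prod_{i=1}^{j-1}(n_0+i-1)^{K\log N}},
\end{equation*}
and this series converges because the denominator grows factorial-fast (a power of $(n_0+j)!$, which dominates every exponential by Stirling) while the numerator grows only exponentially. Once finite energy is established, the finite-energy criterion gives transience a.s.\ on the cluster.

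The main obstacle is ensuring that the paths used to route the flow from the entry vertex of a level-$j$ cluster to its many exit vertices do not inflate the energy through edge-sharing, since the naive ``path-length times flow-squared'' estimate is valid only for edge-disjoint paths. I would address this by using the diameter-2 property of $G(\mathcal{N}_n,p_n)$ to choose these routes so that they are edge-disjoint outside of a short initial segment, so that any overlap contributes at most a constant factor per cluster. Any remaining loss (even of polynomial order in $j$) is absorbed harmlessly by the super-exponential decay of the denominator above, so the energy bound survives and transience follows.
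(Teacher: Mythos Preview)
Your construction does not deliver the claimed energy bound. The denominator $\prod_{i=1}^{j-1}\mathcal{M}_i$ would be justified only if at stage $j$ the flow had been distributed over $\prod_{i=1}^{j-1}\mathcal{M}_i$ \emph{disjoint} carriers, each holding a $1/\prod_i \mathcal{M}_i$ share. But in your nested-ball scheme all the flow at stage $j$ sits inside the single ball $B_{k_{n_0+j-1}}(\mathbf{0})$ and is spread over only $\mathcal{M}_{j-1}$ sibling $k_{n_0+j-2}$-balls; when you then fan it out to the $\mathcal{M}_j$ sibling $k_{n_0+j-1}$-balls in $B_{k_{n_0+j}}(\mathbf{0})$, each of those siblings receives $\sim 1/\mathcal{M}_j$ of the total flow, not $1/\prod_i\mathcal{M}_i$. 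That flow must then traverse the sibling (to reach the exit edges at the next stage) along a path of length $\sim L(n_0+j-1)\sim (L+1)^{j}$, so the energy contributed at stage $j$ is of order
\[
\mathcal{M}_j\cdot L(n_0+j-1)\cdot\Bigl(\frac{1}{\mathcal{M}_j}\Bigr)^{2}\;\sim\;\frac{(L+1)^{j}}{j^{\,K\log N}},
\]
and this diverges: the exponential numerator defeats the merely polynomial $\mathcal{M}_j$. No edge-disjointness argument inside a single $k_{n_0+j-1}$-ball can rescue this, since the lower bound above comes from the flow values themselves, not from overlap.

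What is missing is a genuine tree in which the number of \emph{disjoint} carriers grows geometrically with $j$ at a rate strictly larger than the path-length growth factor $L+1=3$. The paper obtains this by working in the annuli $A_n=(k_{n-1},k_n]$ rather than nested balls, partitioning each $A_n$ into $4^n+4^{n+1}$ disjoint entrance- and exit-sets (this explicit disjointness is the step you omit), and showing via Borel--Cantelli that for all large $n$ every entrance--exit pair inside $A_n$ is joined by an edge, so that one can extract a $4$-ary tree whose generation-$n$ nodes lie in distinct entrance-sets. Each tree edge at generation $n$ then corresponds to a path of length $O(3^n)$ (Lemma~\ref{L.12}) carrying flow $4^{-n}$, with each percolation edge reused at most four times; the resistance is bounded by $\sum_n 3^n/4^n<\infty$ via Proposition~3 of \cite{GKZ}. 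Your argument can be repaired along these lines, but not with the nested-ball fan-out as written.
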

\begin{proof}
Let $k_n=\lfloor n\log n\rfloor$, $c=\inf_kc_k >0$, $A_{n}= (k_{n-1},k_n]$-annulus, and denote by
${\cal M}_n$ the number of edges connecting $A_{n}$ and $A_{{n+1}}$. Then ${\cal M}_n$
stochastically dominates

$$B_n={\rm Bin}\left(|A_{n}||A_{{n+1}}|,
\frac{c} {N^{(1+\delta)k_{n+1}}}\right),$$

\noindent hence
$$E{\cal M}_n\gtrsim N^{2k_n}\frac{c}{N^{(1+\delta)k_{n+1}}}
\sim cN^{(1-\delta)n\log n}\,\,\,\,{\rm as}\,\,n\to\infty.$$

Since $cN^{(1-\delta)n\log n}>>4^n$ as $n\to\infty$, and $\text{Var}[B_n]=O(E[B_n])$ as
$n\to\infty$, it can be shown using \cite{Kal} (Lemma 4.1) that
\begin{equation}\label{E.4.2} P(\mathcal{M}_n>4^n)\to 1\text{   as   } n\to\infty.\end{equation}
 Therefore by (\ref{E.4.2})
for all large $n$ we can pick $4^n$ direct edges from $A_{{n-1}}$ to $A_{{n}}$. Since $|A_{n}|\geq
(1-\varepsilon)N^{n\log n}$ for some $0<\varepsilon<1$, we can subdivide $A_{n}$ into $4^n+4^{n+1}$
disjoint subsets, each containing $O(N^{n[\log n-\log 4/\log N]})$ vertices. We assign $4^n$ of
these subsets as {\it entrance-sets} for edges from $A_{{n-1}}$, and $4^{n+1}$ of them as {\it
exit-sets} for edges to $A_{{n+1}}$, and identify an edge from each one of the $4^n$ exit-sets in
$A_{{n-1}}$ to a different entrance set in $A_{{n}}$. The end-points of those edges are an {\it
in-vertex} in the entrance-set, and an {\it out-vertex} in the exit-set in the previous annulus.

We now connect by an edge  each one of the $4^n\cdot 4^{n+1}$ pairs (entrance-set, exit-set) in
$A_{{n}}$. The probability that there is no such a pair connection is
$$\sim\left(1-\frac{c}{N^{(1+\delta)n\log n}}
\right)^{N^{2n\log n}}\sim{\rm exp}(-cN^{(1-\delta)n\log n})\,\,{\rm as}\,\,n\to\infty,$$ hence the
probability of the event that any of the pairs fail to be connected is
$$O(4^{2n}{\rm exp}(-cN^{(1-\delta)n\log n}))\,\,\,\, {\rm as}\,\,\, n\to\infty.$$
Since this is summable, then by Borel-Cantelli there exists a random $n_0$ such that for all $n\geq
n_0$ all the pairs in $A_{n}$ are connected. Therefore we can construct an infinite tree which is
rooted at some vertex in $A_{{n_0}}$, whose nodes are in-vertices in  entrance-sets in successive
annuli $A_{n}$, such that each node has 4 children, one in each of 4 different entrance-sets in the
next annulus (all disjoint), which are connected by edges to corresponding 4 different exit-sets in
the previous annulus, so that the $n$-{\it th} generation consists of $4^n$ vertices.

It remains to connect by paths within each entrance-set its in-vertex to the end-points of the
edges that connect the entrance-set to its corresponding 4 exit-sets in the annulus (these
end-points are {\it out-vertices} in the entrance set), and similarly to connect by paths within
each exit-set its out-vertex to the end-point of the edge from its corresponding entrance-set (this
end-point is an {\it in-vertex} in the exit-set). In order for such paths to exist we need to
assume that all this is done within the percolation cluster. Since the clusters in good $k_n$-balls
have size at least  $ N^{\gamma k_n}$ with $(1+\delta)/2<\gamma<1$ for all sufficiently large $n$
(see \cite{DGo2}, Def. 4.1, (4.4), (4.6), (4.22)), we  take $N^\gamma$ instead of $N$ above, so
that the construction takes place in the percolation cluster. Then the connecting paths exist and
there may be more than one in each set. For $n>n_0$, the length of a path from a node of the tree
in $A_{{n}}$ to any one of its children in the next generation is bounded by
$1+1+K(n_0)(3^{n-n_0}+3^{n-n_0})$, with $K(n_0)$ given in Lemma \ref{L.12}. The 1's come from the
single edges between an annulus $A_{n}$ and the next one, and from the single edges connecting
out-vertices in entrance-sets to in-vertices in exit-sets in the annulus. The $3^{(n-n_0)}$'s come
from the lengths of paths joining the in-vertex to the out-vertices in an entrance set, and the
length of a path joining the in-vertex and the out-vertex in an exit-set in $A_{n}$. Hence the
length of a path from a node in the tree in the $n$th generation to any of its 4 children in the
next generation is bounded by $C3^n$, by  Lemma \ref{L.12} for some positive constant $C$.

By construction the paths joining a node in the tree to its children in the next generation can
have common edges only within the entrance-set. Since there are at least 1 and at most 4
out-vertices in an entrance-set, then each edge in the paths is used at most 4 times. Then it
follows by Proposition 3 of \cite{GKZ}, with $\beta=3$ and $\alpha=\gamma=4$, that the resistance
of the tree from the root to infinity is at most
\begin{equation*} 4\sum_{n=n_0}^\infty \frac{C3^n}{4^n}<\infty.\end{equation*}
Therefore, by the criterion for transience-recurrence in subsection \ref{crit-trans-rec}    the
walk on the percolation cluster is transient.
\end{proof}

\subsubsection{Transience for $\delta = 1$, $\alpha >6$.}

\begin{theorem}\label{T.trans2} In the case $\delta=1$, sufficiently large $C_2$ and $\alpha >6$, for almost every realization of the percolation cluster   the
random walk on the cluster is transient.
\end{theorem}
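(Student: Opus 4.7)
The plan is to mirror the proof of Theorem \ref{T.trans1}: embed an infinite sub-tree $\mathcal{T}$ of the percolation cluster rooted near $\mathbf{0}$ and apply the finite-energy criterion of Subsubsection \ref{crit-trans} (or Proposition 3 of \cite{GKZ}) to bound the effective resistance from the root to infinity. Transience on $\mathcal{T}$ will then transfer to the whole cluster via the Rayleigh monotonicity principle. As a preliminary I would adopt the scheme (\ref{E.3.1})--(\ref{E.3.2}) with $K$ and $b$ satisfying $K>2/\log N$ and $b>2K$, arranging $\alpha>b\log N$; the hypothesis $\alpha>6$ gives enough room to select such $K$ and $b$ for every $N\ge 2$. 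With this choice Lemma \ref{L.10'} gives, for large $n$, a direct edge between every pair of good $k_n$-clusters inside a common $k_{n+1}$-ball, and Lemma \ref{L.13} bounds the expected shortest-path length inside a $k_{n_0+j}$-cluster by $CD(n_0)\,j$.

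The tree $\mathcal{T}$ is built hierarchically using the nested balls $B_{k_{n_0+m}}(\mathbf{0})$. Inside $B_{k_{n_0+m+1}}(\mathbf{0})$ there are $M_m:=N^{k_{n_0+m+1}-k_{n_0+m}}\sim (n_0+m)^{K\log N}$ disjoint $k_{n_0+m}$-sub-balls, a positive fraction of which are good for large $m$. The root is a vertex in the cluster of $B_{k_{n_0}}(\mathbf{0})$, and the children of a level-$m$ vertex $u\in C_u\subset \tilde B_u$ (where $\tilde B_u$ is the $k_{n_0+m}$-ball containing the $k_{n_0+m-1}$-cluster $C_u$) are chosen in good $k_{n_0+m}$-siblings of $\tilde B_u$ inside $B_{k_{n_0+m+1}}(\mathbf{0})$, assigned so that children of distinct parents occupy disjoint sub-balls. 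By Lemma \ref{L.10'} each parent--child link is realized by a single direct edge between clusters, flanked by internal paths of expected length $O(m)$ by Lemma \ref{L.13}. The branching factor $\gamma_m$ can be kept proportional to $M_m\sim m^{K\log N}$, so the level sizes $T_m := \prod_{j<m}\gamma_j$ grow super-exponentially.

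Splitting a unit flow $\theta$ uniformly at each vertex of $\mathcal{T}$, the flow on a level-$m$ tree edge is $1/T_m$. Because distinct children occupy disjoint sub-balls and the direct edges are chosen distinct, each cluster edge is used by at most polynomially many tree paths, and the contribution of level-$m$ edges to the energy is bounded by $O(m)/T_m$. Summing yields
\[
  \mathbb{E}\,\mathcal{E}(\theta)\;\lesssim\;\sum_{m\geq 1}\frac{C\,m}{T_m}\;<\;\infty,
\]
so by Markov's inequality $\mathcal{E}(\theta)<\infty$ almost surely and transience follows.

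The main obstacle is the combinatorial bookkeeping. Unlike the $\delta<1$ case, where exponentially many direct edges between consecutive annuli yield an exponentially branching tree almost automatically, here only polynomially many direct edges are available; the construction must instead lean on the ``full'' interconnectivity of good sub-balls provided by Lemma \ref{L.10'} and on a careful disjoint assignment of parent--child paths. Making this precise while keeping polynomial branching and controlled internal path lengths is the technical heart of the argument, and it is exactly what the conditions $b>2K$ and $\alpha>b\log N$ (hence $\alpha>6$) are tailored to provide.
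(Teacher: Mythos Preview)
Your plan has the right overall architecture (subgraph, unit flow, finite-energy criterion, Rayleigh), but the tree you describe does not actually support the energy bound you claim, and the parameter choice $b>2K$ is not strong enough. Two related problems:

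\medskip
\textbf{(i) The branching is only polynomial, not super-exponential.} In your construction every level-$m$ vertex $u$ lies in some $k_{n_0+m-1}$-sibling inside $B_{k_{n_0+m}}(\mathbf{0})$, so for \emph{all} level-$m$ vertices the enclosing ball $\tilde B_u$ is the same ball $B_{k_{n_0+m}}(\mathbf{0})$. Their children must then occupy disjoint $k_{n_0+m}$-siblings of this one ball, and there are only $M_m\sim m^{K\log N}$ of those. Hence $T_{m+1}\le M_m$, so $T_m$ grows only like $m^{K\log N}$, and the branching factor $\gamma_m=T_{m+1}/T_m\to 1$; it cannot be kept proportional to $M_m$.

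\medskip
\textbf{(ii) With $b>2K$ you cannot control the energy inside $B_{k_{n_0+m}}(\mathbf{0})$.} Lemma~\ref{L.10'} guarantees one direct edge between the cluster of $B_{k_{n_0+m}}(\mathbf{0})$ and each sibling $k_{n_0+m}$-ball, but the \emph{endpoints} of these $T_{m+1}$ exit edges lie somewhere in the single cluster of $B_{k_{n_0+m}}(\mathbf{0})$. All $T_m$ parents must route their outgoing flow through this one cluster to reach those endpoints, so the internal paths overlap in an uncontrolled way; a single edge could carry total flow of order $1$, and the $O(m)/T_m$ energy estimate is unjustified. The paper avoids this by taking $b>3K$ (not $2K$) and proving instead that every good $k_n$-ball in $B_{k_{n+1}}(\mathbf{0})$ is directly connected to every good $k_{n+1}$-ball in the next annulus $A_{n+1}$ (the calculation of $r_n$ in (\ref{E:rn1})--(\ref{E:rn2}), where $2k_{n+2}-k_n-k_{n+1}\sim 3K\log n$). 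With that stronger connectivity each $k_n$-ball in $A_n$ receives flow through at most two edges and emits through at most three, so only a bounded number (at most six) of internal paths run through any ball, and Lemma~\ref{L.13} then gives the energy bound $\sum_n n/n^{K\log N}<\infty$. The condition $b>3K$ together with $K\log N>2$ is exactly what forces $\alpha>b\log N>6$; with only $b>2K$ you would be aiming at $\alpha>4$, which the argument does not deliver.
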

\begin{proof} The idea of the proof is to construct a subgraph of the percolation cluster
$\mathcal{C}$ and  a flow on the subgraph  that satisfies the finite energy condition criterion.

We assume that $k_n$ is given as in (\ref{E.3.1}) with  $2/\log N<K<b<\alpha/ \log N$, and in this
proof we take $b>3K$. Hence we  have the condition of Lemma \ref{L.10'}.

Given $\alpha >1$ and sufficiently large $C_2$, there exists $n_{00}$ such that for all $n\geq
n_{00}$ all the $N^{K\log n}$ $k_n$-balls in $B_{k_{n+1}}(\mathbf{0})$ are $\beta$-good with $\beta
=\varepsilon$ where $\varepsilon $ is as in the proof of Theorem 2.1. This follows since the
probability that a $k_n$-ball is not $\varepsilon$-good (i.e. $X_{k_n}<\varepsilon$) is $<
c\zeta^{Kn\log n}$ (see (\ref{E.2.42}), (\ref{E.2.52})), and
\begin{equation*} \sum_n N^{K\log n}\cdot \zeta^{Kn\log n} <\infty.\end{equation*}

The edges of the subgraph will be decomposed into a sequence of subsets:
\begin{itemize}
\item edges connecting successive  $A_n=(k_{n},k_{n+1}]\text{-annulus },\; n=1,2,\dots,\; k_n=\lfloor Kn\log
n\rfloor$,
\item these edges go from disjoint good $k_n$-balls in $A_n$ to disjoint good $k_{n+1}$-balls in $A_{n+1}$,
\item there are also edges within  the $k_n$-balls (and $k_{n+1}$-balls)   connecting the entrance and exit vertices in these balls.
\end{itemize}

\medskip

Recall that the graphs $G({\cal N}_n,p_n)$ in subsection \ref{ss.3.2} are complete for $b>2K$ and $n\geq n_0$ (Lemma
\ref{L.10'}). We now compute a lower bound for
\begin{equation}\label{E:rn1} r_n:=P(\text{a good } k_n\text{-ball in }  B_{k_{n+1}}{(\mathbf{0})}
\text{ is connected to  a good }k_{n+1}\text{-ball in }  A_{n+1})
\end{equation}
   for large $n$. We have for large $n$, from equation
   (\ref{E.3.2}),

\begin{eqnarray*}
\lefteqn{\qquad r_n\,\,\gtrsim\,\, 1-\left(1-\frac{a\log n\cdot N^{b\log
n}}{N^{2k_{n+2}}}\right)^{\varepsilon^2N^{k_n+k_{n+1}}}}\\
&&\sim 1-{\rm exp}(-a\varepsilon^2\log n\cdot N^{b\log n-[2(k_{n+2})-k_n-k_{n+1}]}),\end{eqnarray*}
and using (\ref{E.3.3})
\begin{equation*}2k_{n+2}-k_n-k_{n+1}
=k_{n+1}-k_n+2(k_{n+2}-k_{n+1})\sim 3K\log n,\end{equation*} hence
 \begin{equation} \label{E:rn2} r_n\gtrsim
1-{\rm exp}(-a\varepsilon^2\log n\cdot n^{(b-3K )\log N}).
\end{equation}
There are $ N^{K\log n}\,\,k_n$-balls in $B_{k_{n+1}}(\mathbf{0})$, and $(N^{K\log(n+1)}-1)\sim
N^{K\log(n+1)}\,\,k_{n+1}$-balls in $A_{n+1}$, and \[\sum_n n^{K\log N}(n+1)^{K\log N}{\rm
exp}(-a\varepsilon^2\log n\cdot n^{(b-3K)\log N})<\infty\quad{\rm if}\quad b>3K,\] which we now assume,
so, for such $b$ and for almost every realization of the percolation cluster there exists $n_0$
such that for all $n\geq n_0\geq n_{00}$,
\begin{equation}\label{E.4.3} \text{ every good }k_n\text{-ball in } B_{k_{n+1}}({\bf 0})\text{ is connected in $\mathcal{C}$ to every good }k_{n+1}\text{-ball in
}A_{n+1},\end{equation} and this will be used in the construction below.
\medskip

By the Rayleigh monotonicity principle, to prove that the random walk is transient on $\mathcal{C}$
it suffices to show that it is transient on a subgraph of $\mathcal{C}$. Given a realization of the
cluster and associated $n_0$ satisfying (\ref{E.4.3}), we will construct a subgraph and a unit flow
on it to satisfy  the  energy criterion for transience.

The flow has the following properties:
 Start with $B_{k_{n_0}}(
 \mathbf{0})$ and assume that {\bf 0} belongs to $\mathcal{C}$. Choose one edge
from $B_{k_{n_0}}(\mathbf{0})$ to each one of the $N^{K\log (n_0+1)}\,\,k_{n_0}$-balls in
$A_{n_0}$. The unit flow entering at {\bf 0} is divided into $N^{K\log (n_0+1)}$ equal parts going
to each one of the $k_{n_0+1}$-balls in $A_{n_0}$. The ball $B_{k_{n_0}}(\mathbf{0})$ has an
internal structure which is the set of points (vertices) of $\Omega_N$ and edges that are contained
in $\mathcal{C}\cap B_{k_{n_0}}$. There are many ways that the flow can go through paths from {\bf
0} to the (at least 1 and at most $N^{K\log (n_0+1)}$) exit-vertices in the cluster of
$B_{k_{n_0}}(\mathbf{0})$, splitting appropriately at branch vertices on the paths in order to
achieve the division of the flow as stated. Denote by $E_0$ the energy of the flow on the subgraph
of the cluster connecting $\mathbf{0}$ to  the exit vertices of $B_{k_{n_0}}(\mathbf{0})$. The flow
will then pass through a series of disjoint subsets of edges in $\mathcal{C}$  denoted
$\{G_n\}_{n\geq 1}$ with the energies denoted by $\{E_n\}_{n\geq 1}$.  $G_1$ consists of edges from
the at most $N^{K\log (n_0+1)}$ exit-vertices  in the cluster of $B_{k_{n_0}}(\mathbf{0})$ to the
$N^{K\log (n_0+1)}$ disjoint $k_{n_0}$-balls in $A_{n_0}$ and the edges connecting the entrance
vertices in these balls to the exit vertices. Similarly for $n\geq 2$ $G_n$ consists of edges from
the $N^{K\log n}$ disjoint $k_{n-1}$-balls in $A_{n-1}$  to the $N^{K\log (n+1)}$ disjoint
$k_{n}$-balls in $A_{n}$ and the edges connecting the (at most $2$) entrance vertices in these
balls to the (at most 3) exit vertices.

We now specify in detail the choice of the edges in $G_n$ and the flow in each of these edges.  For
$n>n_0$ each of the $N^{K\log (n+1)}\,\, k_{n}$-balls in $A_{n}$ gets $1/N^{K\log (n+1)}$ amount of
flow entering through $1$ or $2$ edges, which then goes through the internal structure of each
$k_{n+1}$-ball and is then divided along $1$, $2$ or $3$ edges to the $N^{K\log
(n+2)}\,k_{n+2}$-balls in $A_{n+1}$. To do this we first enumerate the $N^{K\log (n+1)}\,\,
k_{n}$-balls, in $A_{n}$ denoted $B^n_1,\dots,B^n_{N^{K\log (n+1)}}$ and then enumerate the
$N^{K\log (n+2)}\,k_{n+1}$-balls in $A_{n+1}$ denoted $B^{n+1}_1,\dots,B^{n+1}_{N^{K\log (n+2)}}$.
We first choose edges between $B^n_1$ and $B^{n+1}_1,B^{n+1}_2$ and assign flow $1/ N^{K\log
(n+2)}$ to the edge to $B^{n+1}_1$ and $1/ N^{K\log (n+1)}-1/ N^{K\log (n+2)}$ to the edge to
$B^{n+1}_2$ We then fill $B^{n+1}_2$ up to level $1/ N^{K\log (n+2)}$ from $B^n_2$ and successively
assign flows to edges from the $B^n_i$'s to the $B^{n+1}_j$'s so that each $B^n_i$ becomes empty
and each $B^{n+1}_j$ is filled up by the end of the procedure. This can be done in several ways so
that all the entrance flows and exit flows  have the same order of magnitude.

 This procedure is repeated for the
successive $A_n$. This means that for each $n>n_0$, each one of the $k_{n}$-balls in $A_n$ gets
$1/N^{K\log (n+1)}$ total amount of entrance flow. Noting that for large $n$ \[
\frac{1}{N^{K\log(n+1)}}<\frac{2}{N^{K\log(n+2)}},\] each of the $k_{n}$-balls has 1 or 2 entrance
edges and $1$, $2$ or $3$ exit edges.
 Any entrance-exit
pair in the $k_n$-balls can be connected by a path (within the ball by completeness) of expected  length
bounded by $CD(n_0) n$ (Lemma \ref{L.13}). Therefore each of the edges belongs to at most $6$ paths
and the expected energy of the flow (recall Definition \ref{D.energy})  is then bounded by
\begin{equation} 6\sum^\infty_{n=n_0}N^{K\log (n+1)}\frac{CD(n_0)
n}{N^{2K\log (n+1)}}= 6\sum^\infty_{n=n_0}\frac{CD(n_0) n}{(n+1)^{K\log N}},\end{equation}
 where the $n$th summand  refers to the expected energy  of the flow from entrance
vertices in $A_n$ to the entrance vertices in $A_{n+1}$. Then the expected energy of the flow is finite if
\[\sum_n\frac{n}{n^{K\log N}}<\infty,\]  which holds because $ K>2/\log N $. Hence with the assumption that $b>3K$ we can construct
a flow on a subgraph of $\mathcal{C}$  with finite energy for almost every realization of the percolation cluster and therefore the random walk on the
cluster is transient. Since this holds for $K\log N>2$ and  $b>3K$, and we have assumed that
$\alpha >b\log N$, then $\alpha
>6$ suffices.
\end{proof}
\bigskip

Finally, we can give the main result.

\begin{theorem}\label{T.main2}  Consider the simple random walk on the percolation cluster with
$\delta=1$,   $C_2>0$.  Then for almost every realization of the percolation cluster there exists a critical $\alpha_c\in (0,\infty)$
such that for $\alpha <\alpha_c$ the random walk is recurrent and for $\alpha >\alpha_c$ the random
walk is transient.

\end{theorem}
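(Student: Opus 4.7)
The strategy is to combine a monotone coupling in the parameter $\alpha$ with Rayleigh's monotonicity principle, and then read off the existence of a critical value from Theorems \ref{T.rec} and \ref{T.trans2}, which already cover $\alpha\leq 1$ and $\alpha>6$ respectively.

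Step one: realise all the percolation models on a common probability space. I would assign to each unordered pair $\{\mathbf{x},\mathbf{y}\}\subset\Omega_N$ an independent uniform random variable $U_{\mathbf{x},\mathbf{y}}$ on $[0,1]$ and declare $\{\mathbf{x},\mathbf{y}\}$ an edge of $\mathcal{G}^\infty_N(\alpha)$ iff $U_{\mathbf{x},\mathbf{y}}\leq p_{\mathbf{x},\mathbf{y}}(\alpha)$. Because $c_k=C_0+C_1\log k+C_2k^\alpha$ is non-decreasing in $\alpha$ for every integer $k\geq 1$, the connection probability $p_{\mathbf{x},\mathbf{y}}(\alpha)$ is non-decreasing in $\alpha$, so $\mathcal{G}^\infty_N(\alpha)\subseteq\mathcal{G}^\infty_N(\alpha')$, and hence the infinite cluster of $\mathbf{0}$ satisfies $\mathcal{C}(\alpha)\subseteq \mathcal{C}(\alpha')$ as a subnetwork (in both vertices and edges) whenever $\alpha\leq \alpha'$. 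By Theorem \ref{T.1}, the cluster is nonempty with positive probability for every $\alpha>0$ (for $C_2$ large enough), so all random walks in question are well defined.

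Step two: apply Rayleigh's monotonicity principle. Any unit flow on $\mathcal{C}(\alpha)$ from $\mathbf{0}$ to infinity with finite energy extends to a unit flow on $\mathcal{C}(\alpha')$ with the same energy by assigning zero flow to the new edges of $\mathcal{C}(\alpha')\setminus\mathcal{C}(\alpha)$. Therefore, using the finite-energy criterion in subsection \ref{crit-trans}, transience of the walk on $\mathcal{C}(\alpha)$ propagates to $\mathcal{C}(\alpha')$ for every $\alpha'>\alpha$; equivalently, recurrence propagates downward. Setting
\[
\alpha_c:=\inf\{\alpha>0:\text{the walk on }\mathcal{C}(\alpha)\text{ is transient a.s.}\},
\]
Theorem \ref{T.trans2} yields $\alpha_c\leq 6$, while Theorem \ref{T.rec} yields $\alpha_c\geq 1$, so $\alpha_c\in(0,\infty)$. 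For $\alpha>\alpha_c$ one picks $\alpha'\in(\alpha_c,\alpha)$ on which the walk is a.s.\ transient, and the coupling transfers transience to $\mathcal{C}(\alpha)$. For $\alpha<\alpha_c$, one picks $\alpha'\in(\alpha,\alpha_c)$, on which the walk is a.s.\ recurrent, and the coupling transfers recurrence to $\mathcal{C}(\alpha)$.

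The one point that needs real care is the implicit $0$-$1$ law: for each fixed $\alpha$, the events ``the walk on $\mathcal{C}(\alpha)$ is transient'' and ``\dots is recurrent'' each have probability $0$ or $1$. I would justify this from the translation invariance of the percolation model under the group $\Omega_N$, together with uniqueness of the infinite cluster (Theorem \ref{T.1}), by a standard ergodic argument in the same spirit as the Kolmogorov $0$-$1$ step already used in the proof of Theorem \ref{T.rec}. With this dichotomy at every fixed $\alpha$, the existence of the critical $\alpha_c$ and the monotone transition from recurrence to transience follow immediately from the coupling and Rayleigh's principle.
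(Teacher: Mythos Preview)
Your proof is correct and follows essentially the same route as the paper's: both couple the percolation models for different $\alpha$ so that clusters are monotone, invoke Rayleigh's monotonicity principle to propagate transience upward in $\alpha$, and then sandwich $\alpha_c$ between the values supplied by Theorems~\ref{T.rec} and~\ref{T.trans2}. Your added $0$--$1$ law discussion is a reasonable clarification (the paper defines $\alpha_c$ sample-wise on the coupled space without making this point explicit), but it does not change the argument.
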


\begin{proof}  By Theorems  \ref{T.rec} and \ref{T.trans2}  there exist
$0<\alpha_1<\alpha_2<\infty$ such that the random walk on the percolation cluster is recurrent for
$\alpha\leq \alpha_1$ and transient for $\alpha=\alpha_2$.  Moreover, given $\alpha<\alpha'$ we can
construct the two associated percolation clusters (using the same $C_2$)  on one probability space
so that the $\alpha$-cluster is a subgraph of the $\alpha'$-cluster with probability one (see
Remark 2.2 in \cite{DGo2}). But then the Rayleigh monotonicity principle implies that if the random
walk on the $\alpha$-cluster is transient it is also transient on the $\alpha'$-cluster. We define
$\alpha_c=\inf\{\alpha:\text{the walk on the }\alpha\text{-cluster is transient}\}$, which yields
the desired result.
\end{proof}

\section{Further questions}

Questions that could be addressed which lie outside the scope of the present paper are as follows.

We have focussed on connection probabilities with $c_k$ having logarithmic and polynomial growth.
It would be interesting to study existence of percolation with intermediate growth, and related
questions for random walks.

What is the exact value of the critical $\alpha_c$, and is the walk recurrent or transient at
$\alpha=\alpha_c$?

\end{document}